\documentclass[
  a4paper, 
  reqno, 
  oneside, 
  11pt
]{amsart}

\usepackage[utf8]{inputenc}

\usepackage{vmargin}
\setpapersize{A4}
\setmargins
{2.5cm}		
{1.5cm}		
{16cm}		
{23.42cm}	
{10pt}		
{1cm}		
{0pt}		
{2cm}		

\usepackage{cancel}

\usepackage[dvipsnames]{xcolor} 
\colorlet{cite}{LimeGreen!50!Green}
\usepackage{tikz}  
\usetikzlibrary{arrows,positioning}
\tikzset{ 
  baseline=-2.3pt,
  text height=1.5ex, text depth=0.25ex,
  >=stealth,
  node distance=2cm,
  mid/.style={fill=white,inner sep=2.5pt},
}

\usepackage{lmodern}
\usepackage{tikz-cd}
\usepackage{amsthm, amssymb, amsfonts}

\usepackage{graphicx,caption,subcaption}
\usepackage{fourier}
\usepackage{braket}  
\usepackage{microtype}

\usepackage[%
  bookmarks=true,			
  unicode=true,			
  pdftitle={Lagrangian Skeleta, collars and duality},		%
  pdfauthor={Elizabeth Gasparim}{Francisco Rubilar}{Bruno Suzuki},	%
  pdfkeywords={Duality}{adjoint orbit}{cotangent bundle},	
  colorlinks=true,		
  linkcolor=Blue,			
  citecolor=cite,		
  filecolor=magenta,		
  urlcolor=RoyalBlue			
]{hyperref}				
\usepackage{cleveref}

\usepackage{fourier}
\usepackage{tikz}
\usetikzlibrary{matrix}

\usepackage{tikz-cd}

\newtheoremstyle{mydef}
{}		
{}		
{}		
{}		
{}	
{}		
{ }		
{\thmname{#1}\thmnumber{ #2}\thmnote{ #3}}	

\newtheoremstyle{newdef}
{4pt}
{}
{}
{0pt}
{\bfseries}
{.}
{ }
{\thmname{#1}\thmnumber{ #2}\textnormal{\thmnote{ (#3)}}}

\theoremstyle{plain}
\numberwithin{equation}{section}%
\newtheorem{theorem}[equation]{Theorem} 

\newtheorem{corollary}[equation]{Corollary}
\newtheorem{proposition}[equation]{Proposition}
\newtheorem{lemma}[equation]{Lemma}

\DeclareMathOperator{\Spec}{Spec}
\DeclareMathOperator{\SU}{SU}
\DeclareMathOperator{\Tot}{Tot}
\DeclareMathOperator{\Pic}{Pic}
\DeclareMathOperator{\Elm}{Elm}

\DeclareMathOperator{\Ext}{Ext}

\newcommand{\ce}{\mathrel{\mathop:}=}

\author{E. Ballico, E. Gasparim, F. Rubilar, B. Suzuki}
\title{Lagrangian Skeleta, Collars and Duality}
\begin{document}
	\begin{abstract}
		We present a geometric realization of the  duality between skeleta in $T^*\mathbb P^n$  and  collars of local surfaces. 
		Such duality is predicted by combining two auxiliary types of duality: on one side, 
		symplectic duality between $T^*\mathbb P^n$  and a crepant resolution of the $A_n$ singularity;
		on the other side,  toric duality between two types of  isolated quotient  singularities. We give a correspondence
		between Lagrangian submanifolds of a cotangent bundle and vector bundles on a collar, and  describe those birational transformations 
		within the skeleton which are dual  to deformations of vector bundles.
	\end{abstract}

\maketitle
\tableofcontents

\vspace{2cm}

\section{Skeleton to collar duality}

The simplest example of symplectic duality is the one between the cotangent bundle of projective space $T^*\mathbb P^{n-1}$ 
 and the crepant resolution $\widetilde{Y_n}$ of the $A_{n-1}$ singularity obtained as a quotient $Y_n=\mathbb C^2/\mathbb Z_n$ \cite{BLPW, BF}. 
There exists also a duality between $\widetilde{Y_n}$ and the surface $Z_n = \Tot \mathcal O_{\mathbb P^1}(-n)$,
in the sense  that they  are both minimal resolutions of quotient singularities, but their respective singularities  have dual toric fans.
In fact,  the singular surface $\mathcal X_n$ obtained from $Z_n$ by contracting the zero section 
is also a quotient of $\mathbb C^2$ by the cyclic group of $n$ elements, 
but the singularity of $\mathcal X_n$ is of type $\frac1n(1,1)$ whereas the singularity of $Y_n$ is of type $\frac1n(1,n-1).$ 
 Motivated by these two dualities we discuss some features of the resulting duality between $T^*\mathbb P^{n-1}$ and $Z_n$. On one side, 
 we consider $T^*\mathbb P^{n-1}$  together with a complex potential, thus forming a Landau--Ginzburg model, and we study the Lagrangian skeleton of the corresponding Hamiltonian flow; on the other side,
 we describe the behaviour of vector bundles on the surfaces $Z_n$ considered as algebraic varieties. 
    
 In both cases we will  focus our attention on building blocks used for those types of gluing procedures which may be viewed as surgery operations. 
 We will see that vector bundles on what we call the collar of $Z_n$ (see Sec.\thinspace \ref{bundles}) behave similarly to components of the Lagrangian skeleton of $T^*\mathbb P^{n-1}$.  
 
 Denoting by \textbf{bir} a birational transformation applied to a compactified Lagrangian and \textbf{def} a deformation
of the complex structure of a vector bundle (without describing a categorial equivalence) we give a geometric description of a 1-1 correspondence between objects and some essential morphisms. Such a duality is described by the diagram in the following theorem. 
  \begin{theorem} \label{t1}The following diagram commutes:
\begin{figure}[h]\begin{equation}\label{diagr}
\begin{tikzcd}[row sep=10ex, column sep = -1.5ex,
/tikz/column 5/.append style={anchor=base east},
/tikz/column 6/.append style={anchor=base west}
]
  L_j \arrow[d,swap, "\textup{\bf bir}"] 
& \subset T^*\mathbb{P}^{n-1} \arrow[rrr, Leftrightarrow, "\textup{\textbf{dual}}"]   
&\phantom{xxxxx} & \phantom{xxxxx} & \phantom{.}
 \phantom{x...}\mathcal{O}_{Z_{n}^{\circ}}(j)
& \oplus 
&\!\! \mathcal{O}_{Z_{n}^{\circ}}(-j)\phantom{xxx} \\
 L_{j+1}
& \subset T^*\mathbb{P}^{n-1} \arrow[rrr,Leftrightarrow,swap, "\textup{\textbf{dual}}"]
& & & \phantom{.}
\!\!\!\mathcal{O}_{Z_{n}^{\circ}}(j+1)
& \oplus \arrow[u, swap, "\textup{\bf def}"] 
& \mathcal{O}_{Z_{n}^{\circ}}(-j-1) .
\end{tikzcd}
\end{equation}
\caption*{Duality between Lagrangians and vector bundles.}
\end{figure}
\end{theorem}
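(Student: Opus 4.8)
The plan is to prove commutativity by first pinning down the two horizontal dualities at the level of objects, and then showing that the left-hand birational surgery and the right-hand deformation are exchanged by this objectwise dictionary. Since, as stated, $\mathbf{bir}$ and $\mathbf{def}$ are not arrows in an equivalence of categories but concrete geometric operations, ``the diagram commutes'' must be read as the single assertion that $\mathbf{def}$ is the image of $\mathbf{bir}$ under the duality: starting from $L_j$, dualizing gives $\mathcal{O}_{Z_n^\circ}(j)\oplus\mathcal{O}_{Z_n^\circ}(-j)$, while first applying $\mathbf{bir}$, then dualizing, then $\mathbf{def}$ must return the same split bundle on the collar $Z_n^\circ$. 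Thus the proof reduces to three verifications, one for each pair of parallel arrows.

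First I would establish the top and bottom horizontal arrows, namely the assignment $L_j \leftrightarrow \mathcal{O}_{Z_n^\circ}(j) \oplus \mathcal{O}_{Z_n^\circ}(-j)$. This is the object part of the correspondence announced for the collar in Section~\ref{bundles}: to each component $L_j$ of the Lagrangian skeleton of the Landau--Ginzburg model on $T^*\mathbb{P}^{n-1}$ one attaches, through the symplectic/toric duality with $Z_n = \Tot\,\mathcal{O}_{\mathbb P^1}(-n)$, the rank-two split bundle whose summands record the degree $j$ of the corresponding skeletal stratum. I would make this explicit by writing down gluing data for $\mathcal{O}_{Z_n^\circ}(\pm j)$ over the two standard charts of $Z_n^\circ$ and matching them with the local model of $L_j$ near its critical locus.

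Next I would describe the two vertical arrows in the same local coordinates. On the left, $\mathbf{bir}\colon L_j \to L_{j+1}$ is realized as an elementary transformation (cf.\ the operator $\Elm$) along the rational curve in the compactified Lagrangian, raising the skeletal index by one. On the right, $\mathbf{def}$ is the jump deformation connecting the split bundles: the generic member of the relevant one-parameter family is the more balanced bundle $\mathcal{O}_{Z_n^\circ}(j)\oplus\mathcal{O}_{Z_n^\circ}(-j)$, which specializes to $\mathcal{O}_{Z_n^\circ}(j+1)\oplus\mathcal{O}_{Z_n^\circ}(-j-1)$, so the arrow points from the special fibre to the generic one. The key is to isolate the single integer invariant, the index $j$, that both operations shift by one, and to compute $\Ext^1\big(\mathcal{O}_{Z_n^\circ}(j+1),\mathcal{O}_{Z_n^\circ}(-j-1)\big)$ on the collar in order to confirm that the relevant extension is nontrivial and to identify the class realizing the jump.

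Finally, commutativity of the square follows by comparing the two composites on this common invariant and on the explicit transition functions: going right-then-up and down-then-right must produce the same gluing cocycle for $\mathcal{O}_{Z_n^\circ}(j)\oplus\mathcal{O}_{Z_n^\circ}(-j)$. I expect the main obstacle to be precisely the matching of the two \emph{morphisms} rather than the objects: without a categorical equivalence one cannot invoke functoriality, so one must check by hand that the birational surgery and the jump deformation induce the same change of gluing data across the duality. Concretely, the delicate point is to verify that the elementary transformation on the symplectic side and the $\Ext^1$-extension on the bundle side are governed by the same local parameter, so that the two vertical arrows are genuinely dual and the square closes.
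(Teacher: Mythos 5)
Your overall architecture --- pin down the object correspondence $L_j \Leftrightarrow \mathcal{O}_{Z_n^\circ}(j)\oplus\mathcal{O}_{Z_n^\circ}(-j)$, construct each vertical arrow separately, and close the square by tracking the single index $j$ --- is the same as the paper's, which assembles Theorem \ref{t1} from the skeleton computation \eqref{formskel}, the birational maps of Sec.~\ref{elm}, the collar classification of Prop.~\ref{elms}, and the deformations of Sec.~\ref{def}. However, two of your ingredients have genuine gaps. First, realizing \textbf{bir} as ``an elementary transformation along the rational curve in the compactified Lagrangian'' cannot work as stated: with the indexing of Sec.~\ref{elm}, the compactification of $L_j$ is the projectivization $\mathbb{P}^j\times\mathbb{P}^{n-j-1}$, that of $L_{j+1}$ is $\mathbb{P}^{j+1}\times\mathbb{P}^{n-j-2}$, and an elementary transformation of a projective bundle never changes the base of the ruling, whereas here the base must jump from $\mathbb{P}^j$ to $\mathbb{P}^{j+1}$. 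The paper instead obtains \textbf{bir} from the Segre embedding followed by a linear projection (equivalently, blowing up a linear subvariety such as $L\times\{p\}$ and contracting strict transforms), using that both projectivizations are birational to the same projective space; the operator $\Elm$ you invoke belongs to the bundle side of the duality, where it is used in the proof of Prop.~\ref{elms} to show that the collar restriction depends only on the splitting type mod $n$.

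Second, on the \textbf{def} side, exhibiting a nonzero class in $\Ext^1\bigl(\mathcal{O}_{Z_n^\circ}(j+1),\mathcal{O}_{Z_n^\circ}(-j-1)\bigr)$ is not sufficient: a nonzero extension class does not determine the middle term, so nontriviality alone does not show that the generic member of your family is $\mathcal{O}_{Z_n^\circ}(j)\oplus\mathcal{O}_{Z_n^\circ}(-j)$; moreover $H^1$ of line bundles on the quasi-affine collar is infinite dimensional, so this is not the tractable computation you expect. The paper avoids both issues by building the family upstairs on $Z_n$: write a bundle $E$ of splitting type $j$ as an extension with class $p\in\Ext^1(\mathcal{O}(j),\mathcal{O}(-j))$, push $p$ through the inclusion into $\Ext^1(\mathcal{O}(j+1),\mathcal{O}(-j-1))$, and take the family $t\cdot\iota(p)$, whose $t=0$ fibre is $\mathcal{O}(j+1)\oplus\mathcal{O}(-j-1)$ and whose $t\neq 0$ fibres are $E$; restriction to $Z_n^\circ$ together with Prop.~\ref{elms} then yields exactly the uparrow of diagram \eqref{diagr}. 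Finally, the ``delicate point'' you defer --- matching gluing cocycles of \textbf{bir} and \textbf{def} across the duality --- is neither needed nor well posed: \textbf{dual} is a correspondence of isomorphism classes indexed by $j$ (the two sides live on different spaces, and no action on morphisms is claimed), so commutativity means precisely that applying \textbf{bir}, then \textbf{dual}, then \textbf{def} returns the isomorphism class $\mathcal{O}_{Z_n^\circ}(j)\oplus\mathcal{O}_{Z_n^\circ}(-j)$, which is pure index bookkeeping once each arrow has been constructed. By postponing your self-imposed stronger verification you leave the proposal incomplete on its own terms, while the statement actually asserted already follows from the constructions sketched above.
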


 The surfaces $Z_n$ have rich moduli spaces of vector bundles, but it 
  is  mainly the restriction of  a vector bundle
  to the collar of $Z_n^\circ$ (see \ref{def:collar}) that plays a role in  this duality. 
 The cotangent bundle is taken with the canonical symplectic structure and Lagrangian skeleta are described in Sec.\thinspace\ref{skel}. 
 Vector bundles on the local surfaces $Z_n$ are building blocks for vector bundles on compact surfaces. In fact,
  a new gluing procedure called {\it grafting} introduced in \cite{GS} explores the local contribution of these building blocks to the top Chern class.
 This grafting procedure was successful in explaining the physics mechanism underlying the phenomenon of instanton decay around a complex line with negative self-intersection, 
 showing that instantons may decay by inflicting curvature to the complex surface that holds them \cite[Sec.\thinspace7]{GS}.
 For a line with self-intersection $-n$, grafting is done via cutting and gluing over a collar $Z_n^\circ$. 
 The set of isomorphisms classes of rank 2 vector vector bundles over such a collar $Z_n^\circ$ presents a behaviour similar to that of the Lagrangian skeleton of the cotangent bundle $T^*\mathbb P^{n-1}$.
 Therefore our construction here offers a geometric interpretation of this particular instance of duality by exploring building blocks of surgery operations on both sides. 
 When considered in families, one Lagrangian in the skeleton 
is taken to the next via a birational transformation (Sec.\thinspace \ref{elm}) whereas a bundle on the collar is taken to another via deformation of the complex structures (Sec.\thinspace \ref{def}). 
 In this sense we may say that when considering objects of this duality, birational transformations  on Lagrangian skeleta occur as dual to deformations of vector bundles.

\section{Lagrangian skeleton of $T^*\mathbb P^n$}\label{skel}

In this section we will calculate  skeleta of  certain Landau--Ginzburg models. By a Landau--Ginzburg
model we mean a complex manifold together with a complex valued function. 

 Let $(M,\omega)$ be a symplectic manifold together with 
a potential $h$. We assume that $h$ is a Morse function.
In the case when $h$ is a real valued function, the stable manifold of a critical point $p$  consists of   all the points in $M$ that are taken 
to $p$ by the  gradient flow of $h$. However,  when $h$ is a complex valued function, even though the stable manifold 
of a point $p$ is still formed by points that flow to $p$, the natural choice is to use  the Hamiltonian flow of $h$
(which can be thought of as the symplectic gradient).
Furthermore, in the cases considered here, the Hamiltonian flow is given by a torus action (as described in Sec.\thinspace \ref{action})
 and the critical points of $h$ 
are the fixed points of such action. 

 Let $L$ be
the union of the stable manifolds  of the Hamiltonian flow of $h$ with respect to the
K\"ahler metric.  Then $L$ is the isotropic
skeleton of $(M, \omega)$.  When $L$ is of middle dimension, it is called the {\bf Lagrangian
skeleton} of $(M, \omega)$. 
In the case of exact symplectic manifolds,  the Lagrangian
skeleton of $M$  is the complement of the locus escaping to infinity under
the natural Liouville flow, see \cite{Ru,STW}.

To describe the Lagrangian skeleton of $T^*\mathbb P^n$, we will use the Hamiltonian torus action.
We start out with  $\mathbb P^n$ described by homogeneous coordinates $[x_0,x_1,\dots ,x_n]$,  covered by 
the usual open charts
$U_i= \{x_i \neq 0\}$. 
We then write  trivializations of the 
cotangent bundle $T^*\mathbb P^n$ taking products $V_i=U_i \times \mathbb C^n$ and  over the $V_0$ chart we write  coordinates as
$V_0=\left\{[1,x_1,\dots, x_n],(y_1,\dots, y_n)\right\}.$
In this chart, we  write the Hamiltonian action of the torus $\mathbb T\ce \mathbb C\setminus \{0\}$ on $T^*\mathbb P^n$ as 
\begin{equation}\label{action2}\mathbb T \cdot V_0=\left\{[1,t^{-1}x_1,\dots, t^{-n} x_n],(ty_1,\dots, t^ny_n)\right\}.\end{equation}
Note that the same action can be written as
$$\mathbb T \cdot V_0=
\left\{[t^n,t^{n-1} x_1,\dots,x_n],(ty_1,\dots, t^ny_n)\right\}.$$

We will now describe the Lagrangian skeleton  corresponding to this Hamiltonian action. 
We start by showing an example, i.e. the case of $T^*\mathbb P^3$ and then we present the general procedure.\\

\paragraph{\bf Example: skeleton of $T^*\mathbb P^3$}

We take $ \mathbb P^3$ with homogeneous coordinates $[x_0,x_1,x_2,x_3]$, and cover it by open sets
$U_i= \{x_i \neq 0\}$ and  charts $\varphi_i\colon U_i \to \mathbb C^3$ given by 
$\varphi_i([x_0,x_1,x_2, x_3])= \left(\frac{x_0}{x_i}, \ldots, \hat{x_i},\ldots, \frac{x_3}{x_i}\right) $.
The transition matrices for the cotangent bundle $T_{ij} \colon\varphi_i (U_i\cap U_j)\rightarrow \mbox{Aut}(\mathbb C^3)$ are
$$T_{01}= \left(\begin{matrix} -x_1^2  &   -x_1x_2  &  -x_1x_3 \cr
                                                         0   &       x_1  & 0 \cr
                                                         0   &      0      &  x_1       \end{matrix}\right)\quad 
    T_{02}=  \left(\begin{matrix}    -x_1x_2    &     -x_2^2 & -x_2x_3\cr
                                                           x_2         &      0  & 0 \cr
                                                             0 & 0 & x_2  \end{matrix}\right)\quad                                          
T_{03}= \left(\begin{matrix}      -x_1x_3   &  -x_2x_3 & -x_3^2   \cr
                                                      x_3  &    0 & 0 \cr
                                                       0 & x_3 & 0   \end{matrix}\right) .$$ 
                                                       
                                                       Consequently,  we can write down a cover for the cotangent bundle as $V_i= U_i \times \mathbb C^3$, and in coordinates
\begin{align*}
V_0&=\left\{[x_0,x_1,x_2,x_3],(y_1,y_2,y_3)\right\},\\
V_1&=\left\{[x_1^{-1},1,x_1^{-1}x_2, x_1^{-1}x_3],(-x_1^2 y_1-x_1x_2y_2-x_1x_3y_3,x_1y_2,x_1y_3)\right\},\\
V_2&=\left\{[x_2^{-1},x_2^{-1}x_1,1,x_2^{-1}x_3],(-x_1x_2y_1-x_2^2y_2-x_2x_3y_3,x_2y_1,x_2y_3)\right\},\\
V_3&=\left\{[x_3^{-1},x_3^{-1}x_1,x_3^{-1}x_2,1],(-x_1x_3y_1-x_2x_3y_2-x_3^2y_3, x_3y_1,x_3y_2)\right\}.
\end{align*}

Now we take the Hamiltonian action of the torus $\mathbb T$ on $T^*\mathbb P^3$ given by
\begin{align*}
\mathbb T \cdot V_0&=\left\{[1,t^{-1}x_1,t^{-2} x_2,t^{-3}x_3],(ty_1,t^2y_2,t^3y_3)\right\}=
\left\{[t^3,t^2x_1,t x_2,x_3],(ty_1,t^2y_2,t^3y_3)\right\},\\
\intertext{and compatibility on the intersections implies that}
\mathbb T \cdot V_1&=\left\{[tx_1^{-1},1,t^{-1}x_1^{-1}x_2,t^{-2} x_1^{-1}x_3],(-t^{-1}(x_1^2 y_1+x_1x_2y_2+x_1x_3y_3),tx_1y_2,t^2x_1y_3)\right\},	\\
\mathbb T \cdot V_2&=\left\{[t^2x_2^{-1},tx_2^{-1}x_1,1,t^{-1}x_2^{-1}x_3],(-t^{-2}(x_1x_2y_1+x_2^2y_2+x_2x_3y_3),t^{-1}x_2y_1,tx_2y_3)\right\},\\
\mathbb T \cdot V_3&=\left\{[t^3x_3^{-1},t^2x_3^{-1}x_1,tx_3^{-1}x_2,1],(-t^{-3}(x_1x_3y_1+x_2x_3y_2+x_3^2y_3),t^{-2} x_3y_1,t^{-1}x_3y_2)\right\}.
\end{align*}

Using these, we calculate the Lagrangians.
\vspace{3mm}

\noindent {\sc Stable manifold of $e_0$} - on $V_0$ we find the points satisfying
$$\lim_{t\rightarrow 0 }[1,t^{-1}x_1,t^{-2} x_2,t^{-3}x_3],(ty_1,t^2y_2,t^3y_3) = [1,0,0,0],(0,0,0)$$
this requires $x_1=x_2=x_3=0$ and we obtain the fibre over the point $[1,0,0,0]$, that is, 
$$\boxed{L_0= T^*_{[1,0,0,0]}\mathbb P^3\sim \mathbb C^3. }$$

\vspace{3mm}
                                                       
 \noindent {\sc Stable manifold of $e_1$} - on $V_1$ we look for the points satisfying

$$
 	\lim_{t\rightarrow 0 }[tx_1^{-1},1,t^{-1}x_1^{-1}x_2,t^{-2} x_1^{-1}x_3],(-t^{-1}(x_1^2 y_1+x_1x_2y_2+x_1x_3y_3),tx_1y_2,t^2x_1y_3) \\= [0,1,0,0],(0,0,0).
$$

 This requires $x_1^{-1}x_2=  x_1^{-1}x_3 = 0 =  x_1^2 y_1+x_1x_2y_2+x_1x_3y_3$, but since $x_1\neq 0$ in this chart, we get 
 $x_2= x_3 = 0 =   y_1$. So, we are left with points having coordinates 
 $[1,x_1,0,0],(0,y_2,y_3)$ on $V_0$ which on $V_1$ become 
 $[x_1^{-1},1,0, 0],(0,x_1y_2,x_1y_3)$. We obtain (after taking the closure, that is, by adding the point $[1,0,0,0],(0,0,0)$)
 the set of points $\{ [1,x_1,0,0],(0,y_2,y_3)\mapsto [x_1^{-1},1,0, 0],(0,x_1y_2,x_1y_3)\}$ so that 
 $$\boxed{L_1=
  \mathcal O_{\mathbb P^1}(-1)\oplus \mathcal O_{\mathbb P^1}(-1).}$$
 
 \vspace{3mm}
 
  \noindent {\sc Stable manifold of $e_2$} - on $V_2$ we look for  the points satisfying
$$
\lim_{t\rightarrow 0 }      [t^2x_2^{-1},tx_2^{-1}x_1,1,t^{-1}x_2^{-1}x_3],(-t^{-2}(x_1x_2y_1+x_2^2y_2+x_2x_3y_3),t^{-1}x_2y_1,tx_2y_3) \\= [0,0,1,0],(0,0,0).	
$$
     
This requires 
$x_2^{-1}x_3=0=x_1x_2y_1+x_2^2y_2+x_2x_3y_3=x_2y_1$ but since $x_2\neq 0$ in this chart, we get
$x_3=0 $ and $x_1x_2y_1+x_2^2y_2=0 = x_2y_1$ and since on this chart $x_2\neq 0$ it follows that  
 $y_1=y_2=0 .$

 We obtain (after taking the closure) the set of points $\{ [1,x_1,x_2,0],(0,0,y_3)\mapsto  [x_2^{-1},x_2^{-1}x_1,1,0],(0,0,x_2y_3)\}$, so 
 $$\boxed{L_2=\mathcal O_{\mathbb P^2}(-1).}$$

\vspace{3mm}
 
  \noindent {\sc Stable manifold of $e_3$} - on $V_3$ we find the points satisfying
$$
\lim_{t\rightarrow 0 }  [t^3x_3^{-1},t^2x_3^{-1}x_1,tx_3^{-1}x_2,1],(-t^{-3}(x_1x_3y_1+x_2x_3y_2+x_3^2y_3),t^{-2} x_3y_1,t^{-1}x_3y_2)= [0,0,0,1],(0,0).	
$$

This requires 
 $x_1x_3y_1+x_2x_3y_2+x_3^2y_3= x_3y_1=x_3y_2= 0$          and since $x_3 \neq 0$ in this chart, we get that $y_1=y_2=y_3=0$.                  
We obtain the set of points $\{ [x_0,x_1,x_2,x_3],(0,0,0)\}$, so 
 $$\boxed{L_3=  \mathbb P^3.}$$

 The generalization of this procedure to higher dimensions  now becomes evident, giving:\\

\paragraph{\bf General case: the skeleton of  $T^*\mathbb P^n$.}
We take $ \mathbb P^n$ with homogeneous coordinates $[x_0,x_1,x_2,\ldots, x_n]$, and cover it by standard open sets
$U_i= \{x_i \neq 0\}$ and charts $\varphi_i\colon U_i \to \mathbb C^n$ given by 
$\varphi_i([x_0,x_1,x_2,\ldots, x_n])= \left(\frac{x_0}{x_i}, \ldots, \hat{x_i},\ldots, \frac{x_n}{x_i}\right) $.
The transition matrices for the cotangent bundle $T_{ij} \colon\varphi_i (U_i\cap U_j)\rightarrow \mbox{Aut}(\mathbb C^n)$ are
$$T_{01}= \left(\begin{matrix} 
	-x_1^2  &   -x_1x_2	&	\cdots	& -x_1x_n	\\
	 0		&       x_1	&	\cdots	& 0			\\
	 \vdots &  \vdots	& \ddots	&	\vdots	\\
	 0		&	0		& \cdots & x_1
 \end{matrix}\right), \qquad 
T_{0n}= \left(\begin{matrix}
	-x_1x_n	& -x_2x_n	& \cdots	& -x_2x_n	& -x_n^2   \\
	x_n 	& 0			& \cdots 	& 0			& 0  \\
	0 		& x_n 		& \cdots	& 0 		& 0 \\
	\vdots	& \vdots 	& \ddots	& 0 		& 0 \\
	0 		& 0 		& \cdots 	& x_n 		& 0
\end{matrix}\right) .
$$ 

$$T_{0j}=\left(\begin{matrix}
-x_jx_1	& -x_jx_2	&\cdots	&-x_j^2	& \cdots	& -x_jx_{n-1}	& -x_jx_n 	\\
x_j 	& 0			&\cdots	& 0		& \cdots	& 0				& 0 		\\
0 		& x_j		&\cdots	& 0		& \cdots	& 0				& 0			\\
\vdots 	& 			&\ddots	&		& 			& 				& 	\vdots	\\
	 	& 			&		& 0		& 			& 				& 			\\
\vdots 	& 			&		&		& \ddots	& 				& 	\vdots	\\
0 		& 0			&\cdots & 0		& \cdots	& x_j			& 0			\\
0 		& 0			&\cdots & 0		& \cdots 	& 0				& x_j
\end{matrix}\right).$$

Consequently,  we can write down a cover for the cotangent bundle as $V_i= U_i \times \mathbb C^n$, and in coordinates
\begin{align*}
V_0&=\left\{[x_0,\ldots,x_n],(y_1,\ldots,y_n)\right\},\\
V_1&=\left\{[x_1^{-1},1,x_1^{-1}x_2, \ldots, x_1^{-1}x_{n-1}, x_1^{-1}x_n],
(-x_1^2y_1 - x_1x_2y_2 - x_1x_3y_3,x_1y_2,\ldots,x_1y_n)\right\},\\
\intertext{\centering \vdots}	
V_j&=\left\{
[x_j^{-1},x_j^{-1}x_1, \ldots 1, \ldots, x_j^{-1}x_n],
(-x_jx_1y_1 - \ldots -x_jx_ny_n, x_jy_2, \ldots, x_jy_n)\right\},\\
\intertext{\centering \vdots}
V_n&=\left\{[x_n^{-1},x_n^{-1}x_1, \ldots, x_n^{-1}x_{n-1},1],
(-x_nx_1y_1 - \ldots - x_n^2y_n, x_ny_2,\ldots, x_ny_n)\right\}.
\end{align*}

Now we take the Hamiltonian action of the torus $\mathbb T$ on $T^*\mathbb P^n$ given by
\begin{align*}
\mathbb T \cdot V_0&=\left\{[1,t^{-1}x_1,t^{-2} x_2,\ldots, t^{-n}x_n],
(ty_1,t^2y_2,\ldots, t^ny_n)\right\}=
\left\{[t^n,t^{n-1}x_1,t^{n-2} x_2, \ldots, x_n],(ty_1,t^2y_2,\ldots, t^ny_n)\right\},\\
\intertext{and compatibility on the intersections implies that}
\mathbb T \cdot V_1&=\left\{
[tx_1^{-1},1,t^{-1}x_1^{-1}x_2,\ldots, t^{n-1} x_1^{-1}x_n],
(-t^{-1}(x_1^2 y_1 + \ldots +x_1x_ny_n),tx_1y_2,\ldots, t^{n-1}x_1y_n)\right\},\\
\intertext{\centering \vdots}
\mathbb T \cdot V_n&=\left\{
[t^nx_n^{-1},t^{n-1}x_n^{-1}x_1,\ldots, tx_n^{-1}x_{n-1},1],
(-t^{-n}(x_1x_ny_1 + \ldots, + x_n^2y_n),t^{-(n-1)}x_ny_1,\ldots, t^{-1}x_ny_n)\right\}.	
\end{align*}

Using these, we calculate the Lagrangians.
\vspace{3mm}

\noindent {\sc Stable manifold of $e_0$} - on $V_0$ we find the points satisfying
$$\lim_{t\rightarrow 0 }[1,t^{-1}x_1,\ldots,t^{-n}x_n],(ty_1,t^2y_2,\ldots, t^ny_n) = [1,0,\ldots,0],(0,\ldots,0)$$
this requires $x_1=x_2=\cdots=x_n=0$ and we obtain the fibre over the point $[1,0,\ldots,0]$, that is, 
$$\boxed{L_0= T^*_{[1,0,\ldots,0]}\mathbb P^n\sim \mathbb C^n. }$$

\vspace{3mm}
                                                       
 \noindent {\sc Stable manifold of $e_1$} - on $V_1$ we find the points satisfying
 \begin{multline*}
 \lim_{t\rightarrow 0 }
 [tx_1^{-1},1,t^{-1}x_1^{-1}x_2,\ldots, t^{-(n-1)} x_1^{-1}x_n],
 (-t^{-1}(x_1^2 y_1 + \ldots +x_1x_ny_n),tx_1y_2,\ldots, t^{n-1}x_1y_n) \\
 = [0,1,0\ldots,0],(0,\ldots,0).
 \end{multline*}
 
  This requires $x_1^{-1}x_2= \cdots =x_1^{-1}x_n = 0 = x_1^2 y_1 + \ldots +x_1x_ny_n$, but since $x_1\neq 0$ in this chart, we get 
 $x_2= \cdots = x_3 = 0 = y_1$. 
 So, we are left with points having coordinates 
 $[1,x_1,0, \ldots,0],(0,y_2,\ldots, y_n)$ on $V_0$ which on $V_1$ become 
 $[x_1^{-1},1,0 \ldots, 0],(0,x_1y_2, \ldots, x_1y_n)$. We obtain (after taking the closure, that is adding the point $[1,0,\ldots,0],(0,\ldots,0)$)
 $$\boxed{L_1=\{ [1,x_1,0,\ldots, 0],(0,y_2,\ldots, y_n) \mapsto
 [x_1^{-1},1,0,\ldots, 0],(0,x_1y_2,\ldots, x_1y_n)\}\sim \mathcal O_{\mathbb P^1}(-1)\oplus \cdots \oplus \mathcal O_{\mathbb P^1}(-1) .}$$
 ($n-1$ summands).
 \vspace{3mm}
 
   \noindent {\sc Other stable manifolds}
 
 Using similar computations, we have that the Lagrangian $L_j$ corresponding to the fixed point $e_j$ is
 
\begin{equation}\label{formskel}\boxed{
L_j = \begin{cases}
	\mathbb C^n&\text{if} \quad  j=0,\\
	\oplus_{i=1}^{n-j}{\mathcal O_{\mathbb P^j}(-1)}&\text{if} \quad 0<j<n,\\
	\mathbb{P}^n&\text{if} \quad j=n.
\end{cases}
}\end{equation}
 
 
\section{Potentials on the cotangent bundle}\label{action}
    
In this section we consider the question:  what choices of  potential $h$ for a Landau--Ginzburg model    $(T^*\mathbb P^n, h)$ 
is compatible with the Hamiltonian action considered in the previous sections, and hence gives rise to the same skeleta?
We obtain the following result. 

\begin{proposition} Consider $(T^*\mathbb P^n, h_c)$ with coordinates 
$[1, x_1,\dots,x_n],(y_1,\dots, y_n)$. Each  potential $$h_c([1, x_1,\dots,x_n],(y_1,\dots, y_n))=\sum_{i=1}^n -2ix_iy_i + c,$$ has
a corresponding Hamiltonian flow that  coincides with the flow obtained by the torus action given in (\ref{action2}), that is 
 $$\mathbb T \cdot V_0=\left\{[1,t^{-1}x_1,\dots, t^{-n} x_n],(ty_1,\dots, t^ny_n)\right\}.$$
\end{proposition}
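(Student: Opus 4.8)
The plan is to compute the Hamiltonian vector field $X_{h_c}$ of $h_c$ directly in the $V_0$ coordinates and to identify its flow with the one-parameter group (\ref{action2}). First I would fix the canonical symplectic form on the affine chart $V_0\cong\mathbb C^n\times\mathbb C^n$: starting from the tautological one-form $\lambda=\sum_{j=1}^n y_j\,dx_j$ one takes $\omega=-d\lambda=\sum_{j=1}^n dx_j\wedge dy_j$. Since the constant $c$ satisfies $dc=0$, it plays no role in what follows, so the whole family $\{h_c\}_c$ produces the same vector field and hence the same flow; this is already the reason why all the $h_c$ give rise to the same skeleta.

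The computational heart is to solve $\iota_{X_{h_c}}\omega=dh_c$. Writing $X_{h_c}=\sum_j\bigl(a_j\,\partial_{x_j}+b_j\,\partial_{y_j}\bigr)$ and using $dh_c=\sum_j -2j\,(y_j\,dx_j+x_j\,dy_j)$, a comparison of the coefficients of $dx_j$ and $dy_j$ gives $a_j=\partial_{y_j}h_c=-2j\,x_j$ and $b_j=-\partial_{x_j}h_c=2j\,y_j$, so that
\[
X_{h_c}=\sum_{j=1}^n\bigl(-2j\,x_j\,\partial_{x_j}+2j\,y_j\,\partial_{y_j}\bigr).
\]
Thus $X_{h_c}$ is a constant multiple of the infinitesimal generator of the torus action, which carries weight $-j$ on $x_j$ and weight $+j$ on $y_j$. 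Integrating, the flow of $X_{h_c}$ is $(x_j,y_j)\mapsto(e^{-2js}x_j,\,e^{2js}y_j)$, and the substitution $t=e^{2s}$ turns this into $\bigl[1,t^{-1}x_1,\dots,t^{-n}x_n\bigr],(ty_1,\dots,t^ny_n)$, which is exactly (\ref{action2}). Hence the Hamiltonian flow of $h_c$ coincides, after this reparametrisation, with the prescribed torus action; in particular the orbits and their limits as $t\to 0$ are unchanged, so the associated skeleton is the one computed in Section \ref{skel}.

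I expect the delicate points to be bookkeeping rather than conceptual. The main one is fixing the sign convention for $\omega$ (equivalently, deciding which of $x_j,y_j$ is treated as position and which as momentum) so that the base weights come out negative and the fibre weights positive, matching $t^{-j}$ on $x_j$ and $t^{+j}$ on $y_j$; the opposite convention $\omega=d\lambda$ would flip these signs and fail to reproduce (\ref{action2}). The second is interpreting the word ``coincides'' correctly: the factor $2$ in the coefficient $-2j$ merely rescales the flow parameter through $t=e^{2s}$ and therefore leaves the orbits, the fixed points, and the stable manifolds untouched. Finally, I would note that the argument is chart-local on $V_0$, which is enough here because both the potential and the action in the statement are given on $V_0$, with compatibility over the overlaps already recorded in Section \ref{skel}.
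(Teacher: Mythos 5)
Your proof is correct, and it is essentially the paper's computation run in the opposite direction. The paper differentiates the torus action at $t=1$ to obtain the generating vector field $X=\alpha'(1)=\sum_{j}\bigl(-jx_j\partial_{x_j}+jy_j\partial_{y_j}\bigr)$ and then solves $dh(Z)=\omega(X,Z)$ as a system of PDEs for $h$, arriving at the family $h_c$; you start from $h_c$, compute $X_{h_c}$ via Hamilton's equations, and integrate the resulting linear ODEs to recover the action. The paper's direction buys uniqueness for free: it exhibits the $h_c$ as \emph{all} potentials (up to the additive constant) compatible with this flow, whereas your direction proves exactly the stated claim with less machinery. One point where you are in fact more careful than the paper: with $\omega=\sum_j dx_j\wedge dy_j$, the Hamiltonian field of $h_c=\sum_j -2jx_jy_j+c$ is $2\alpha'(1)$ rather than $\alpha'(1)$ (the paper's displayed identity $\omega(X,Z)=-2\sum_i\bigl(ix_ib_i+iy_ia_i\bigr)$ is off by exactly this factor of $2$), so the two flows literally coincide only after the reparametrisation $t=e^{2s}$ that you make explicit; since such a rescaling preserves orbits, fixed points and stable manifolds, the skeleton computed in Section \ref{skel} is unaffected, which is all the proposition is used for.
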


To prove this, first consider the vector field on $T^\ast\mathbb{P}^n$ corresponding to the Hamiltonian action given  in  coordinates by
$$\mathbb T \cdot V_0=
\left\{[1,t^{-1} x_1,\dots,t^{-n}x_n],(ty_1,\dots, t^ny_n)\right\}.$$
On the image of the $V_0$ chart, the right hand side becomes
\begin{align*}
	\alpha(t)&=(t^{-1} x_1,\dots,t^{-n}x_n,ty_1,\dots, t^ny_n)\\
	\intertext{
		so that the derivative gives}
	\alpha'(t)&= (-t^{-2} x_1,\dots,-nt^{-n-1}x_n,y_1,\dots, nt^{n-1}y_n)\\
	\intertext{	and evaluating at $1$ we get}
	\alpha'(1)&= (-x_1,\dots,-nx_n,y_1,\dots, ny_n).
\end{align*}

From the action of this $1$-parameter subgroup, we have obtained the flow $\alpha'(1)$. 
Now we wish to calculate a potential $h$ corresponding to the vector field $X= \alpha'(1)$.

 Let $\omega$  be the canonical symplectic form on $T^\ast\mathbb{P}^n$, then
$h$ must satisfy, for all vector fields $Z \in \mathfrak{X}(M)$
\[
dh(Z)=\omega(X,Z).
\]
In coordinates this gives
$$
\left(
\begin{matrix}
	\displaystyle\frac{\partial h}{\partial {\bf x}}   \displaystyle\frac{\partial h}{\partial {\bf y}} 
\end{matrix}
\right)
\left(
\begin{matrix}
	{\bf a}  \\ {\bf b} 
\end{matrix}
\right)
=
\sum_{i=1}^n dx_i \wedge dy_i \left( (-x_1, \ldots , -nx_n , y_1, \ldots , ny_n),
(  a_1,\ldots ,a_n , b_1,\ldots, b_n )\right) \\
= -2\sum ix_ib_i+iy_ia_i.
$$
where 
${\bf x}= (x_1, \dots, x_n), {\bf y}= (y_1, \dots, y_n),{\bf a}= (a_1, \dots, a_n), {\bf b}= (b_1, \dots, b_n).$
Comparing the terms multiplying $a_k$ and $b_k$ on each side of the equation, 
for $i = 1, \ldots, n$
 we obtain the differential equations
$$
\frac{\partial h}{\partial x_i}  = -2iy_i, \qquad
\frac{\partial h}{\partial y_i}  = -2ix_i.$$
For $c \in \mathbb{C}$, the solutions are:
\begin{equation}\label{toricpot}
h_c = -2x_1y_1 - \ldots -2nx_ny_n +c= \sum_{i=1}^n -2ix_iy_i + c.
\end{equation}

We thus conclude that any Landau--Ginzburg model of the form $(T^*\mathbb P^n, h_c)$ will give rise to the same 
skeleta described above. This concludes the description of our Landau--Ginzburg models and their skeleta on $T^*\mathbb P^n$ and 
 in the next section we discuss birational maps within each skeleton.

\section{Birational maps within the skeleton}\label{elm}

In this section we present the birational transformations between components of the skeleton that justify the vertical downarrow appearing
on the left hand side of diagram (\ref{diagr}).
As we saw in (\ref{formskel}), the component $L_j$  of the skeleton of $T^*\mathbb P^n$ has the  form 
$$\mathcal O_{\mathbb P^j} (-1) \oplus \mathcal O_{\mathbb P^j} (-1) \oplus \cdots \oplus \mathcal O_{\mathbb P^j} (-1)$$
with $n-j$ factors. Projectivizing we obtain $\mathbb P^j \times \mathbb P^{n-j-1}$. Thus, the component $L_{j+1}$ 
has the form $$\mathcal O_{\mathbb P^{j+1}} (-1) \oplus \mathcal O_{\mathbb P^{j+1}} (-1) \oplus \cdots \oplus \mathcal O_{\mathbb P^{j+1}} (-1)$$
with $n-j-1$ factors. Projectivizing we obtain $\mathbb P^{j+1} \times \mathbb P^{n-j-2}$. 
The projectivizations are birationally equivalent, as we describe next,  and up to tensoring by  $\mathcal O(+1)$, we may choose  a birational map 
 taking $L_j$ to $L_{j+1}$. 

 The birational maps $\mathbb P^n\times \mathbb P^m\dasharrow \mathbb P^{n+m}$ we need here 
 are well known, but we recall one construction for completeness.
We take homogeneous coordinates $y_0,\dots ,y_n$ on $\mathbb P^n$ and $z_0,\dots ,z_m$ on $\mathbb P^m$. Set $r:= (n+1)(m+1)-1$ and take homogeneous coordinates $u_{ij}$, $0\le i \le n$, $0\le j\le m$, of $\mathbb P^r$. 
Let $\nu: \mathbb P^n\times \mathbb P^m \to \mathbb P^r$ be the Segre embedding of 
$\mathbb P^n\times \mathbb P^m$ into $\mathbb P^r$ given by the equations $u_{ij} =y_iz_j$.
The birational map (not a morphism, since  there is no birational morphism between these two varieties)
is induced by a linear projection $\ell_M: \mathbb P^r\setminus M\to \mathbb P^{n+m}$, where $M$ is an $(r-n-m-1)$-dimensional linear subspace
 whose equations are coordinates $u_{ij}=0$ for some $i, j$ and the $n+m+1$ homogeneous coordinates
of $\mathbb P^{n+m+1}$ are the ones used to describe $M$. Recall that linear projections in suitable coordinates are just rational maps which forget some of the coordinates.

We start by considering the simplest example, that is, $n=m=1$ and hence $r=3$. Therefore $M$ is a point, say $([0:1],[0:1])$. Take  for $\mathbb P^3$ homogeneous coordinates 
$$x_0= y_0z_0, \quad x_1= y_1z_0, \quad x_2 =y_0z_1, \quad x_3= y_1z_1$$ with 
$M = [0:0:0:1]$ and use $x_0,x_1,x_2$ for coordinates of $\mathbb P^2$.
 
The next step is to consider $n=2$, $m=1$ and hence $r=5$. Take $\mathbb P^5$ with
homogeneous coordinates $$x_0=y_0z_0,\quad x_1 =y_0z_1,\quad x_2=y_1z_0,\quad x_3=y_1z_1,\quad x_4 = y_2z_0,\quad y_5=y_2z_1.$$
Then $M$ is a line contained in the first ruling of $\mathbb P^2\times \mathbb P^1$
so it has the form $L\times \{p\}$ where $L\subset \mathbb P^2$ is a line, 
and $p\in \mathbb P^1$ is a point. If we take $L = \{y_0=0\}$ and $p = [0:1]$ 
we get the equations $x_0=x_1=x_2=x_4=0$ and  the coordinates of $\mathbb P^3$ should be  $x_0,x_1,x_2,x_4$.
We blow-up $L\times \{p\}\subset \mathbb P^2\times \mathbb P^1$ and 
then we contract the strict transform of $\mathbb P^2\times \{p\}$ and $L\times \mathbb P^1$. 
So, the birational map is clear. 

Now, to take one Lagrangian to the next one, we argue in generality. 
Suppose we have 2 quasi-projective varieties $X$, $X'$, with Zariski open 
subsets $U\subseteq X$, $V\subseteq X'$, $U\ne \emptyset$, such that there exists an
 isomorphism $$s\colon U \to V.$$
If for a fixed quasi-projective variety $Y$, 
we need two proper birational morphisms
$$u_1\colon Y \to X\qquad \text{and} \qquad u_2: Y\to X'$$ compatible with $s$, then we have a single
 choice: take first the graph $$W:= \{(x,s(x)\}_{x\in U} \subset U\cup V,$$
then take the closure $T$ of $W$ in $X\times X'$.
Then $T$ has the two morphisms 
$$v_1\colon T\to X\qquad \textup{ and }\qquad v_2\colon T\to X'$$ and any other $(Y,u_1,u_2)$ must be obtained 
by composing $(T,v_1,v_2)$
with a morphism $f: Y\to T$, in such a way that we obtain $$u_1:= f\circ v_1\qquad \textup{ and }\qquad u_2:= f\circ v_2.$$

The argument in this section shows that we have a birational transformation taking $L_j$ to $L_{j+1}$, thus justifying 
the  vertical downarrow {\bf bir} appearing in Thm.\thinspace\ref{t1}
     we now proceed to discuss the other side of the duality in focus here, namely  singularities and vector bundles on their resolutions.
                                                                                                 
\section{Duality for multiplicity $n$ singularities}\label{sing}

We describe a  duality between vector bundles on 2 distinct minimal resolutions of toric singularities of multiplicity $n$,
 which are both quotients of $\mathbb C^2$ by the cyclic group of $n$ elements $\mathbb Z / n \mathbb Z$, 
and whose toric cones are dual, they are:  
$$\mathcal X_n\ce \frac1n(1,1)\quad \text{and} \quad  \mathcal X^\vee_n\ce\frac1n(1,n-1).$$
These singularities are obtained by the following actions:
\[
\begin{pmatrix}
\rho & 0 \\
0 & \rho
\end{pmatrix} \quad \text{for  } \mathcal X_n \quad 
\text{and} \quad 
\begin{pmatrix}
\rho & 0 \\
0 & \rho^{-1}
\end{pmatrix} \quad \text{for } \mathcal X^\vee_n,\]
where $\rho$ is a primitive $n$-th root of unity, that is, $\rho= e^{\frac{2\pi i}{n}}$.
In general the singularity $\frac1n(1,a)$ is obtained from the action $(x,y) \mapsto (\rho x , \rho^a y)$.

A resolution of singularities  $\widetilde{X} \rightarrow X$ is called {\bf minimal}  if $\widetilde{X} \rightarrow X'  \rightarrow X$ 
with $X'$ smooth imply $\widetilde{X} \simeq X'$. 
Let ${ Z}_n$ and $\widetilde{Y_{n}}$ denote the minimal toric resolutions of $\mathcal X_n$ and $\mathcal X^\vee_n$, respectively,
depicted in Fig. \ref{fans1} and \ref{fans2}.
We observe that, in particular we have $Z_2 \simeq \widetilde{Y_{2}}$,
but $Z_n \not \simeq  \widetilde{Y_{n}}$ for $n\neq 2$.

 The surface $ {Z}_n = \Tot \mathcal O_{\mathbb P^1} (-n)$ contains a single rational curve with self-intersection $-n$, 
whereas  $\mathcal X^\vee_n$ contains an isolated $A_{n-1}$-singularity and  $\widetilde{Y_{n}}$ contains a chain of $n-1$  curves $E_i \simeq \mathbb P^1$
for $1 \leq i \leq n-1$  whose intersection matrix $(E_i \cdot E_j)$ coincides with the negative of the Cartan matrix of the simple 
Lie algebra $\mathfrak{sl}_n (\mathbb C)$ of type $A_{n-1}$.

Note that
the Dynkin diagram\, 
\raisebox{.17em}{%
\begin{tikzpicture}[x=1.5em,y=1.4em]
\foreach \pair in {{(0,0)},{(1,0)},{(2,0)},{(4,0)}}{%
\draw[line width=.5pt, fill=black] \pair circle(0.3ex);
}
\node[font=\scriptsize,outer sep=0,inner sep=1pt] (dots) at (3,0) {\,$\cdots$};
\draw[line width=.6pt] (0,0) -- (dots);
\draw[line width=.6pt] (dots) -- (4,0);
\end{tikzpicture}%
\, }
of type $A_{n-1}$ is precisely the  graph dual to the system of curves $E_i$ in the resolution of $\widetilde{Y_{n}}$.


The surfaces $\mathcal X_n$ and $\mathcal X^\vee_n$ are   toric varieties  having   fans formed by a single cone,
calling $\sigma_{\mathcal X_n}$ and $\sigma_{\mathcal X^\vee_n}$ their respective fans, 
we have that $\sigma_{\mathcal X_n}$ is dual to $\sigma_{\mathcal X^\vee_n}$.
In particular, for the case of $n=2$ we also have that $\sigma_{Z_2} \simeq \sigma_{\widetilde{Y_{2}}}$ is self-dual.

We now describe the coordinate rings of the singularities.
We have  $\mathcal X_n = \Spec A$, where
\begin{equation}\label{xis}
A = H^0 (Z_n, \mathcal O) \simeq \mathbb C [x_0, \dotsc, x_n] / (x_i x_{j+1} - x_{i+1} x_j)_{0 \leq i < j < n}.
\end{equation}

Given that $\mathcal X_n \simeq \mathbb C^2 / \Gamma$, where $\Gamma$ is the group generated by
$\left(
\begin{smallmatrix}
\rho & 0 \\
0 & \rho
\end{smallmatrix}
\right)$
for $\rho$ a primitive $n$-th root of unity, we have $\Gamma \simeq \mathbb Z / n \mathbb Z$, with $j \in \mathbb Z / n \mathbb Z$ corresponding to $\left( \begin{smallmatrix} \rho^j & 0 \\ 0 & \rho^j \end{smallmatrix} \right)$.

Functions on the quotient $\mathbb C^2 / \Gamma$ are given by those functions on $\mathbb C^2$ which are invariant under $\Gamma$.

The algebra of functions on $\mathbb C^2$ is $\mathbb C [a, b]$ and $\Gamma$ acts by multiplication by $\rho$ on both $a$ and $b$.
 We thus have that $a^i b^j = (\rho a)^i (\rho b)^j = \rho^{i+j} a^i b^j$ if and only if $\rho^{i+j} = 1$, 
 i.e.\ if and only if $i + j$ is a multiple of $n$. One sees that $\mathbb C [a, b]^\Gamma$ (functions on $\mathbb C^2$ invariant under $\Gamma$) are generated by
$$
a^n, a^{n-1} b, \dotsc, a b^{n-1}, b^n.
$$
Now one can check that the invariants are
\[
\mathbb C [a, b]^\Gamma = \mathbb C [a^n, a^{n-1} b, \dotsc, a b^{n-1}, b^n] \simeq A
\]
with the resolution mapping 
$$a^i b^{n-i} \mapsto x_i \quad \text{for} \quad 0 \leq i \leq n,$$ so that $\mathbb C^2 / \Gamma \simeq \mathcal X_n$.
This map looks quite similar to the Veronese embedding. In fact, $\mathcal X_n$ is the so-called 
affine cone over the {\it Veronese curve} (or {\it rational normal curve}) of degree $n$, i.e.\ $\mathcal X_n \simeq \mathbb C^2 / \Gamma$ 
is the affine cone over the image of the Veronese embedding $\mathbb P^1 \to \mathbb P^n$ given by 
$[a : b] \mapsto [a^n : a^{n-1} b : \dotsb : a b^{n-1} : b^n]$.

The duality between $\mathcal X_n$ and $\mathcal X_n^\vee$ is made clear by their toric fans. Just observe that 
each fan consists of a single cone, and the vectors forming the fan of $\mathcal X_n$ are perpendicular to those of
the fan of $\mathcal X_n^\vee$ as depicted in Fig. \ref{fans1} and \ref{fans2}.

\begin{figure}
\begin{align*}
\begin{tikzpicture}[x=2em,y=2em]
\node at (.75,-.75) {{\it fan of $\mathcal X_n$}};
\draw[line width=.6pt, line cap=round] (-1,3) -- (-1.11,3.33);
\draw[line width=.6pt, line cap=round] (1,0) -- (3.5,0);
\draw[-stealth,line width=.6pt, line cap=round] (0,0) -- (-.99,2.97);
\draw[-stealth,line width=.6pt, line cap=round] (0,0) -- (.97,0);
\foreach \coordinate in {{(-1,3)}, {(0,0)}, {(0,1)}, {(0,2)}, {(0,3)}, {(1,0)}, {(1,1)}, {(1,2)}, {(1,3)}, {(2,0)}, {(2,1)}, {(2,2)}, {(3,0)}, {(3,1)}}{%
\draw[line width=.6pt, fill=black] \coordinate circle (.16ex);
}
\begin{scope}[shift={(6,0)}]
\node at (.75,-.75) {{\it fan of $\widetilde{\mathcal X}_n=Z_n$}};
\draw[line width=.6pt, line cap=round] (-1,3) -- (-1.11,3.33);
\draw[line width=.6pt, line cap=round] (1,0) -- (3.5,0);
\draw[-stealth,line width=.6pt, line cap=round] (0,0) -- (-.99,2.97);
\draw[-stealth,line width=.6pt, line cap=round] (0,0) -- (.97,0);
\draw[-stealth,line width=.6pt, line cap=round] (0,0) -- (0,.97);
\draw[line width=.6pt, line cap=round] (0,1) -- (0,3.5);
\foreach \coordinate in {{(-1,3)}, {(0,0)}, {(0,1)}, {(0,2)}, {(0,3)}, {(1,0)}, {(1,1)}, {(1,2)}, {(1,3)}, {(2,0)}, {(2,1)}, {(2,2)}, {(3,0)}, {(3,1)}}{%
\draw[line width=.6pt, fill=black] \coordinate circle (.16ex);
}
\end{scope}
\end{tikzpicture}
\end{align*}
\caption{$ Z_n$ as toric resolution of $\mathcal X_n$}
\label{fans1}
\end{figure}

\begin{figure}
\begin{align*}
\begin{tikzpicture}[x=2em,y=2em]
\node at (.75,-.75) {{\it fan of $\mathcal X^\vee_n=Y_n$}};
\draw[line width=.6pt, color=white] (-1,3) -- (-1.11,3.33);
\draw[line width=.6pt, line cap=round] (0,1) -- (0,3.5);
\draw[line width=.6pt, line cap=round] (3,1) -- (3.33,1.11);
\draw[-stealth,line width=.6pt, line cap=round] (0,0) -- (2.97,.99);
\draw[-stealth,line width=.6pt, line cap=round] (0,0) -- (0,.97);
\foreach \coordinate in {{(0,0)}, {(0,1)}, {(0,2)}, {(0,3)}, {(1,1)}, {(1,2)}, {(1,3)}, {(2,1)}, {(2,2)}, {(3,1)}}{%
\draw[line width=.6pt, fill=black] \coordinate circle (.16ex);
}
\begin{scope}[shift={(6,0)}]
\node at (.75,-.75) {{\it fan of\, $\widetilde{Y_n}$}};
\draw[line width=.6pt, line cap=round] (1,1) -- (2.47,2.47);
\draw[line width=.6pt, line cap=round] (2,1) -- (3.12,1.56);
\draw[line width=.6pt, line cap=round] (3,1) -- (3.33,1.11);
\draw[-stealth,line width=.6pt, line cap=round] (0,0) -- (2.97,.99);
\draw[-stealth,line width=.6pt, line cap=round] (0,0) -- (0,.96);
\draw[-stealth,line width=.6pt, line cap=round] (0,0) -- (.97,.97);
\draw[-stealth,line width=.6pt, line cap=round] (0,0) -- (1.96,.98);
\draw[line width=.6pt, line cap=round] (0,1) -- (0,3.5);
\draw[line width=.6pt, color=white] (1,0) -- (3.5,0);
\foreach \coordinate in {{(0,0)}, {(0,1)}, {(0,2)}, {(0,3)}, {(1,1)}, {(1,2)}, {(1,3)}, {(2,1)}, {(2,2)}, {(3,1)}}{%
\draw[line width=.6pt, fill=black] \coordinate circle (.16ex);
}
\end{scope}
\end{tikzpicture}
\end{align*}
\caption{$\widetilde{Y_{n}}$ as toric resolution of $\mathcal X^\vee_n=Y_n$}
\label{fans2}
\end{figure}

  
\section{Vector bundles on local surfaces}   
 \label{bundles}   
 
 We now describe vector bundles on $Z_n$, the resolution of the isolated singularity $\mathcal X_n$.
 The surface $Z_n$ is the local model of the neighborhood of a rational line $\ell$ with self-intersection $-n$ 
 in a complex surface $X$. 
 Thus, vector bundles on $Z_n$ model vector bundles around 
such a line $\ell$ in $X$. The case $n=1$ occurs when blowing-up a smooth point, and was explored in \cite{G}.
 
 Recently, a new  complex surgery operation on vector bundles over $Z_n$, named {\it grafting}, was introduced 
 in the context of mathematical physics (see \cite{GS}). It provided an original 
 explanation for the phenomenon of instanton decay in terms of curvature of the underlying space.
  Here we explore  the geometric features of this grafting procedure. 
 When considered from the point of view of grafting, bundles on $Z_n$ occur as building blocks of vector bundles on surfaces, in a sense somewhat analogue (and dual) to the use of the Lagrangian skeleton for  building a symplectic manifold.

 Let $E$ be a vector bundle on a compact complex surface $X$ which contains a $-n$ line. 
 Let $F= E\vert_N$ be the restriction of $E$ to an open neighborhood $N$ of $\ell$ in the analytic topology. Grafting is obtained by replacing $F$ by another vector bundle $F'$, which is then glued to $E\vert_{X\setminus N}$. 
 Note that after grafting the top Chern class of $E$ will in general change, but not the first one. 
 Therefore, this surgery procedure is not obtained by an elementary transformation. 
 The gluing itself is done over $N\setminus \ell$ which is   identified with the complement of the zero section in $Z_n$ called the collar defined below;
 such a gluing is possible because vector bundles on $Z_n$ are completely determined by their restriction to a finite formal neighborhood of $\ell$,
 see \cite{BGK}. 
 We now describe explicit local data on $Z_n$ used to classify vector bundles on them. 
 These vector bundles restricted to the collars will give rise to the dual objects to the components of the skeleta described above.

\label{Zkdef}

For each integer $n$, we have the surface
$
Z_n = \Tot (\mathcal{O}_{\mathbb{P}^1}(-n)).
$
The complex manifold structure  can be described by gluing the open sets 
$$U = \mathbb{C}[z,u]  \quad \mbox{and} \quad  V = \mathbb{C}[\xi,v]$$ 
by the relation
\begin{equation}\label{Zcanonical}
(\xi, v) = (z^{-1}, z^n u)
\end{equation}
whenever $z$ and $\xi$ are not equal to 0.
We call (\ref{Zcanonical}) the {\bf canonical coordinates} for $Z_n$.

Using canonical coordinates, the contraction $Z_n \ce \Tot \mathcal O_{\mathbb P^1} (-n)  \to \mathcal X_n$ sends $z^i u \mapsto x_i$,
where $x_i$ are the coordinates of $\mathcal X_n$ as described in (\ref{xis}).

 Let $E$ be a rank $r$ holomorphic vector bundle on $Z_n$. 
 The restriction of $E$ to the zero section $\ell \simeq \mathbb P^1$ is a rank $r$ bundle on $\mathbb P^1$, which by Grothendieck's lemma splits as a direct sum of line bundles.
 Thus, $E \vert_\ell \simeq \mathcal O_{\mathbb P^1} (j_1) \oplus \dotsb \oplus \mathcal O_{\mathbb P^1} (j_r)$. 
 Following \cite{ballico}, we call $(j_1, \dotsc, j_r)$ the  splitting type of $E$. 
 When $E$ is a rank $2$ bundle with first Chern class $0$, then the splitting type is $(j,-j)$ for some $j \geq 0$ and we say for short that $E$ has \textbf{splitting type} $j$.

 There are many rank 2 vector bundles on $Z_n$. 
 For each fixed splitting type, they can be obtained as a quotient of $\Ext^1(\mathcal O(j), \mathcal O(-j))$. 
Considering isomorphism classes of vector bundles modulo holomorphic equivalence,  moduli spaces were  obtained as follows.

\begin{proposition}
\cite[Thm.\ 4.11]{BGK}
The moduli space of irreducible $\SU (2)$ instantons on $Z_n$ with charge (and splitting type) $j$ is a quasi-projective variety of dimension $2j-n-2$. 
\end{proposition}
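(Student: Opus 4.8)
The plan is to translate the analytic moduli problem into a holomorphic one and then to count parameters through a canonical form for the gluing data. First I would invoke the Kobayashi--Hitchin correspondence, adapted to the local surface $Z_n$ with a fixed asymptotic behaviour along the collar $Z_n^\circ$, to identify irreducible $\SU(2)$ instantons of charge $j$ with isomorphism classes of indecomposable holomorphic rank $2$ bundles $E$ on $Z_n$ having $c_1(E)=0$ and splitting type $(j,-j)$. Under this dictionary irreducibility of the instanton corresponds to indecomposability of $E$, i.e.\ to the associated extension being non-split, which is an open condition; this reduces the statement to counting such holomorphic bundles.

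Next I would present every such bundle as an extension $0\to\mathcal O(-j)\to E\to\mathcal O(j)\to 0$, so that, up to the obvious equivalences fixing the sub- and quotient line bundles, $E$ is classified by $\Ext^1(\mathcal O(j),\mathcal O(-j))\cong H^1(Z_n,\mathcal O(-2j))$. To compute this I would push forward along the projection $\pi\colon Z_n\to\mathbb P^1$: since $\pi$ is affine the Leray spectral sequence degenerates, and using $\pi_*\mathcal O_{Z_n}\cong\bigoplus_{m\ge0}\mathcal O_{\mathbb P^1}(mn)$ one obtains $H^1(Z_n,\mathcal O(-2j))\cong\bigoplus_{m\ge0}H^1(\mathbb P^1,\mathcal O(mn-2j))$, a finite direct sum whose $m$-th summand has dimension $\max(0,\,2j-1-mn)$. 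In canonical coordinates this realizes an extension class as the off-diagonal entry $p(z,u)$ of a transition matrix $\left(\begin{smallmatrix}z^{j}&p\\0&z^{-j}\end{smallmatrix}\right)$, and the cocycle condition confines the admissible monomials $z^iu^l$ to a finite fundamental region.

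The decisive step is to pass from this large cocycle space to the moduli space itself by dividing out the automorphisms that change the trivializations over $U$ and $V$ while preserving the splitting type, since the choice of sub-line-bundle $\mathcal O(-j)\hookrightarrow E$ may be deformed and produces further identifications. I would use these frame changes to eliminate every removable monomial, reducing $p$ to a canonical form with finitely many essential coefficients; the residual reductive automorphism group then acts on the affine space of these coefficients, and the moduli space is the induced geometric quotient, restricted to the open stratum where the splitting type is exactly $j$. Quasi-projectivity follows by presenting this quotient via geometric invariant theory on the affine space of canonical coefficients, and the dimension $2j-n-2$ emerges as the number of surviving coefficients minus the dimension of the residual group.

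The main obstacle I anticipate is exactly this final count. The ambient extension space $H^1(Z_n,\mathcal O(-2j))$ is far larger than $2j-n-2$ --- for $n=1$ it already grows quadratically in $j$ --- so the whole content lies in controlling the infinite-dimensional group of trivialization changes: proving that the canonical form is genuinely rigid (that no redundancy survives beyond the explicit reductive action), that the quotient is separated and quasi-projective on the indecomposable locus, and that the surviving coefficients, once the residual group action is taken into account, number precisely $2j-n-2$. A secondary subtlety is making the Kobayashi--Hitchin identification a genuine bijection on the non-compact $Z_n$: one must fix the behaviour of the bundle along the collar $Z_n^\circ$ so that the analytic charge is matched with the algebraic splitting type $j$ and no moduli are lost or gained at infinity.
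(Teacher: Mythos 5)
First, a point of reference: the paper itself offers no proof of this proposition --- it is imported verbatim from \cite[Thm.\ 4.11]{BGK} --- so your attempt has to be measured against the argument given there. Your overall architecture does match the skeleton of that argument: pass from instantons to holomorphic data (a correspondence which \cite{BGK} also invokes rather than reproves, so treating it as input is legitimate), present $E$ as an extension of $\mathcal O(j)$ by $\mathcal O(-j)$, compute the extension space by pushing forward along $\pi\colon Z_n\to\mathbb P^1$, and realize classes as the off-diagonal entry $p=\sum p_{il}z^iu^l$ of a transition matrix. Your computation $H^1(Z_n,\mathcal O(-2j))\cong\bigoplus_{m\ge 0}H^1\bigl(\mathbb P^1,\mathcal O(mn-2j)\bigr)$, with $m$-th summand of dimension $\max(0,\,2j-1-mn)$, is correct.

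The genuine gap is exactly where you place ``the main obstacle'': the passage from the cocycle space to a quasi-projective moduli space of dimension $2j-n-2$ is the entire content of the theorem, and the mechanism you propose for it --- a geometric/GIT quotient of the canonical-form coefficients by a ``residual reductive automorphism group'' --- is not viable as stated. Even on the stratum of splitting type exactly $j$, the equivalence relation induced by changes of trivialization is not the orbit equivalence of a reductive group acting on an affine space with a geometric quotient: bundles jump within families of fixed splitting type, the identifications are non-separated, and no such quotient of the full coefficient space exists. What actually delivers the theorem is the isolation of the \emph{first-order} part of the extension class: the bundles corresponding to irreducible instantons are determined by the coefficients of $u^1$ in $p$, i.e.\ by a class in the single summand $H^1\bigl(\mathbb P^1,\mathcal O(n-2j)\bigr)$, which has dimension $2j-n-1$, and on this locus one proves that the only surviving identification is overall scaling. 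The moduli space is thereby an open subvariety of $\mathbb P\bigl(H^1(\mathbb P^1,\mathcal O(n-2j))\bigr)\simeq\mathbb P^{2j-n-2}$, which yields quasi-projectivity and the dimension simultaneously; note the numerology $2j-n-2=h^1\bigl(\mathbb P^1,\mathcal O(n-2j)\bigr)-1$, with no contribution from any other summand. Your proposal contains no argument for either of the two decisive points --- (i) that the instanton condition kills, or renders normalizable, all higher-order coefficients, and (ii) that scaling is the only residual equivalence on the first-order data --- and without them the number $2j-n-2$ cannot be produced. (A smaller inaccuracy: ``indecomposable'' is not literally the same as ``the chosen extension is non-split,'' since on $Z_n$ one has $H^0(\mathcal O(-2j))\neq 0$ and a non-split sequence can a priori have decomposable middle term; this too requires an argument.)
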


An equivalent formulation in terms of vector bundles is:

\begin{corollary}
The moduli space of (stable) rank 2 bundles on $Z_n$ with vanishing first Chern class and local 
second Chern class $j$ is a quasi-projective variety of dimension $2j-n-2$. 
\end{corollary}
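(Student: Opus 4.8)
The plan is to derive this statement directly from the preceding Proposition by means of the Kobayashi--Hitchin correspondence (the Donaldson--Uhlenbeck--Yau theorem specialized to complex surfaces), which furnishes a real-analytic identification between the two moduli problems. Concretely, on a K\"ahler surface an irreducible anti-self-dual $\SU(2)$ connection determines, through its $(0,1)$-part, a holomorphic structure on the underlying rank $2$ bundle, and this holomorphic bundle is stable precisely because the connection is irreducible; conversely every stable holomorphic bundle admits a unique compatible Hermitian--Einstein (instanton) connection. First I would record that, on the local surface $Z_n$, this correspondence holds in the appropriate framed/decaying-at-infinity form used in \cite{BGK}, so that the set of irreducible $\SU(2)$ instantons of charge $j$ is in bijection with the set of stable holomorphic bundles obtained this way.

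Next I would match the numerical invariants on the two sides. The structure group $\SU(2)$ forces the determinant line bundle to be trivial, hence the first Chern class of the associated holomorphic bundle vanishes; the instanton charge, being the local integral of the second Chern form, coincides with the local second Chern class; and, as already recorded above, for a rank $2$ bundle with $c_1 = 0$ the splitting type is $(j,-j)$, so the integer $j$ appearing as ``charge (and splitting type)'' in the Proposition is exactly the invariant labelling the bundle side. Irreducibility of the connection translates into stability of the bundle, while reducible (split) connections correspond to the decomposable locus that is excluded from the moduli space. Under this dictionary the two parameter spaces coincide, and the dimension $2j-n-2$ transports verbatim from the Proposition.

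The step I expect to require the most care is the passage from a bijection of sets to an identification of \emph{quasi-projective varieties} of the same dimension, because $Z_n$ is non-compact and the correspondence must be controlled along the collar $Z_n^\circ$: one must fix boundary data at infinity so that the charge is a well-defined relative second Chern class, and so that the analytic gluing parameters match the algebraic $\Ext^1(\mathcal O(j),\mathcal O(-j))$ data from which the bundles are built. Since \cite{BGK} already establishes the instanton moduli space as a quasi-projective variety of dimension $2j-n-2$, and the Kobayashi--Hitchin map is a real-analytic diffeomorphism onto the stable locus, the moduli space of stable rank $2$ bundles with vanishing first Chern class and local second Chern class $j$ inherits both the quasi-projective structure and the stated dimension, which completes the argument.
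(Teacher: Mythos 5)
Your proposal is correct and follows essentially the same route as the paper: the paper offers no separate argument, presenting the corollary as an ``equivalent formulation'' of the instanton statement, with the equivalence being exactly the Kobayashi--Hitchin dictionary (irreducible $\SU(2)$ instanton of charge $j$ $\leftrightarrow$ stable rank $2$ bundle with $c_1=0$ and local second Chern class $j$) already built into the framework of \cite{BGK}. Your write-up simply makes explicit the identification of invariants and the transfer of the quasi-projective structure that the paper leaves implicit.
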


Even though vector bundles on $Z_n$ are many, their restrictions to the collars have very simple behaviour, 
as we now shall demonstrate.

We denote by $\ell$ the $\mathbb P^1$ contained in  $Z_n$  corresponding to 
the zero section of the corresponding vector bundles, and we set 
\begin{equation}
	Z_n^o \ce Z_n \setminus \ell.
\label{def:collar}
\end{equation}
  
We call  $Z_n^o$ the {\bf collar} of $\ell$ in $Z_n$. 
Using the canonical coordinates for $Z_n$  we obtain {\bf canonical coordinates} for the collar by 
 setting
\[
Z_n^o = U^o \cup V^o, 
\]
with the complex manifold structure obtained by gluing the open sets 
$$U^o = \mathbb{C}\times \mathbb C-\{0\}\simeq \mathbb C[z,u, u^{-1}]  \quad \mbox{and} 
\quad  V^o = \mathbb{C}\times \mathbb C-\{0\}\simeq \mathbb C[\xi,v, v^{-1}]$$ 
by the relation
$$
(\xi, v) = (z^{-1}, z^n u).
$$

\begin{lemma}\label{homotopy}
The homotopy type of $Z_n^o$ is that of an $S^1$-bundle over $S^2$, and $\pi_1(Z_n^o)= \mathbb Z/n\mathbb Z$.
\end{lemma}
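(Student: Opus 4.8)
The plan is to identify $Z_n^o$ with (a space homotopy equivalent to) the unit circle bundle of the line bundle $\mathcal O_{\mathbb P^1}(-n)$, and then to read off the fundamental group directly from the explicit gluing data for the collar.

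First I would record that $Z_n=\Tot(\mathcal O_{\mathbb P^1}(-n))$ is a complex line bundle over $\mathbb P^1\cong S^2$ with zero section $\ell$. Choosing a Hermitian metric on this line bundle, the fibrewise radial retraction $v\mapsto v/|v|$ gives a deformation retraction of $Z_n^o=Z_n\setminus\ell$ onto the unit circle bundle $S(\mathcal O_{\mathbb P^1}(-n))$. The latter is an $S^1$-bundle over $S^2$, which settles the first assertion.

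For the fundamental group I would apply van Kampen's theorem to the cover $Z_n^o=U^o\cup V^o$ coming from the canonical coordinates, where $U^o\cong\mathbb C\times\mathbb C^*$ (coordinates $z$ free, $u\neq0$), $V^o\cong\mathbb C\times\mathbb C^*$ (coordinates $\xi$ free, $v\neq0$), and $U^o\cap V^o=\{z\neq0,\ u\neq0\}\cong\mathbb C^*\times\mathbb C^*$. Thus $\pi_1(U^o)=\mathbb Z\langle\gamma_u\rangle$ and $\pi_1(V^o)=\mathbb Z\langle\gamma_v\rangle$ are each generated by the loop around the removed axis, while $\pi_1(U^o\cap V^o)=\mathbb Z\langle\alpha\rangle\oplus\mathbb Z\langle\beta\rangle$, with $\alpha$ the loop $z=e^{i\theta}$ and $\beta$ the loop $u=e^{i\phi}$. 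The crucial step is to push these generators into the two pieces using the gluing $(\xi,v)=(z^{-1},z^nu)$: the loop $\alpha$ is null-homotopic in $U^o$ (there $z$ ranges over all of $\mathbb C$), whereas under $v=z^nu$ it winds $n$ times around $0$, so $\alpha\mapsto\gamma_v^{\,n}$ in $V^o$; the loop $\beta$ maps to $\gamma_u$ in $U^o$ and to $\gamma_v$ in $V^o$. Van Kampen then gives $\pi_1(Z_n^o)=\langle\gamma_u,\gamma_v\mid \gamma_u=\gamma_v,\ \gamma_v^{\,n}=1\rangle\cong\mathbb Z/n\mathbb Z$.

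The only delicate point is the bookkeeping of winding numbers: one must verify that the factor $z^n$ in the transition function is exactly what forces the relation $\gamma_v^{\,n}=1$, i.e.\ that the exponent $n$ appears with multiplicity one and not some other multiple. As an independent cross-check, the same answer follows from the long exact homotopy sequence of the fibration $S^1\to S(\mathcal O_{\mathbb P^1}(-n))\to S^2$, whose connecting map $\pi_2(S^2)\to\pi_1(S^1)$ is multiplication by the Euler number $-n$, so that $\pi_1(Z_n^o)$ is its cokernel $\mathbb Z/n\mathbb Z$.
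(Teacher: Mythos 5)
Your proposal is correct, and its two halves relate to the paper's proof in slightly different ways. For the first assertion you and the paper do essentially the same thing: the paper retracts chart by chart, replacing $U^o\simeq\mathbb C\times\mathbb C^*$ and $V^o\simeq\mathbb C\times\mathbb C^*$ by $D^\pm\times S^1$ and gluing along the boundary circles, which is the chartwise version of your fibrewise radial retraction onto the unit circle bundle $S(\mathcal O_{\mathbb P^1}(-n))$. For the fundamental group the routes diverge: the paper stops after observing that the clutching map $z\mapsto z^n$ has degree $n$ in $\pi_1(SO(2))=\mathbb Z$, leaving the conclusion $\pi_1=\mathbb Z/n\mathbb Z$ to the standard theory of circle bundles over $S^2$ (equivalently, the homotopy exact sequence argument you give as a cross-check, with connecting map $\pi_2(S^2)\to\pi_1(S^1)$ given by the Euler number). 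Your main argument instead runs van Kampen directly on the holomorphic cover $U^o\cup V^o$, tracking how the loops $\alpha$ (in $z$) and $\beta$ (in $u$) include into each piece via $(\xi,v)=(z^{-1},z^nu)$, yielding the presentation $\langle\gamma_u,\gamma_v\mid\gamma_u=\gamma_v,\ \gamma_v^n=1\rangle$; your bookkeeping is right ($\alpha$ bounds in $U^o$, maps to $\gamma_v^{\,n}$ in $V^o$; $\beta$ maps to $\gamma_u$ and $\gamma_v$ respectively). This buys a self-contained computation that does not invoke the classification of circle bundles or lens spaces, at the cost of a longer argument; the paper's version is shorter but relies on that standard input, which is precisely the content of your cross-check.
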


\begin{proof}
Let $D= \{z, |z|\leq 1\}$ be the unit disc in $\mathbb C$, denoted $D^+$ when oriented positively,
and $D^-$ when oriented negatively.
The homotopy type of $Z_n^o$  is then that of 
$$U^o \sim U^+=D^+ \times S^1= [z,u=e^{i \theta}]  \quad \mbox{and} 
\quad  V^o \sim U^-=  D^- \times S^1= [\xi,v=e^{i\phi}]$$ 
with identification in $U^+ \cap V^-$ given by 
$$(\xi= e^{i\alpha}, v=e^{i\phi})= (z^{-1}=e^{-i\alpha}, v= z^nu= e^{i(\theta+n\alpha)}).$$
The result of the identification is an $S^1$-bundle over $ S^2=D^+\cup D^-$, with the $S^1 $ fibers 
identified via the gluing map $z^n $ which has degree $n \in \pi_1(SO(2))=\mathbb Z$ since $SO(2)\simeq S^1$.
\end{proof}

 Let $\iota \colon Z_n^o \rightarrow Z_n$ denote the inclusion, and set
$$L_n(j)\ce \iota^*\mathcal O_{Z_n} (j).$$

\begin{proposition}
For each $n$, the group of  all isomorphism classes  of line bundles
$ \{L_n(j), j \in \mathbb Z\}$ is cyclic of order $n$, hence  $\mathbb{Z}/n\mathbb{Z}$.
\end{proposition}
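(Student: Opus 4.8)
\section*{Proof proposal}

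The plan is to realize the assignment $j\mapsto L_n(j)$ as (the image of) the restriction homomorphism $\iota^*\colon \Pic(Z_n)\to\Pic(Z_n^o)$ and to compute the target. First I would record that $\iota^*$ is a group homomorphism and $\mathcal O_{Z_n}(j)\otimes\mathcal O_{Z_n}(k)\cong\mathcal O_{Z_n}(j+k)$, so $j\mapsto [L_n(j)]$ is a homomorphism $\mathbb Z\to\Pic(Z_n^o)$ whose image is exactly the set $\{L_n(j):j\in\mathbb Z\}$. In particular this set is automatically a subgroup, and proving the proposition amounts to showing that this homomorphism has kernel $n\mathbb Z$ and that its image is all of $\Pic(Z_n^o)$.

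Since $Z_n=\Tot\mathcal O_{\mathbb P^1}(-n)$ is the total space of a line bundle over $\mathbb P^1$, the bundle projection $\pi\colon Z_n\to\mathbb P^1$ induces an isomorphism $\pi^*\colon\Pic(\mathbb P^1)\xrightarrow{\sim}\Pic(Z_n)$; hence $\Pic(Z_n)=\mathbb Z$ generated by $\mathcal O_{Z_n}(1)$. Now I would invoke the excision sequence for the smooth variety $Z_n$ along the closed subvariety $\ell$ with open complement $Z_n^o$,
$$\mathbb Z\xrightarrow{\ \cdot[\ell]\ }\Pic(Z_n)\xrightarrow{\ \iota^*\ }\Pic(Z_n^o)\to 0,$$
so that $\Pic(Z_n^o)\cong\Pic(Z_n)/\langle[\ell]\rangle$. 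The one quantitative input is the class of the zero section: its normal bundle is $\mathcal O_{\mathbb P^1}(-n)$ (equivalently $\ell\cdot\ell=-n$), so under $\Pic(Z_n)\cong\mathbb Z$ one has $[\ell]=-n\,[\mathcal O_{Z_n}(1)]$. Thus the image of $\cdot[\ell]$ is $n\mathbb Z$, giving $\Pic(Z_n^o)\cong\mathbb Z/n\mathbb Z$ with $[L_n(j)]\equiv j\pmod n$; and because $\mathcal O_{Z_n}(1)$ generates $\Pic(Z_n)$ the subgroup $\{L_n(j)\}$ fills all of $\Pic(Z_n^o)$.

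Alternatively---and this is the version I would actually write, being self-contained in the canonical coordinates already set up---I would compute directly with transition functions for the cover $Z_n^o=U^o\cup V^o$. The line bundle $\mathcal O_{Z_n}(j)$ is represented over $U^o\cap V^o$ by the cocycle $z^{\pm j}$ (the sign depending only on the chosen convention for $\mathcal O(1)$, and immaterial below since $j\mapsto -j$ is a bijection of $\mathbb Z/n\mathbb Z$). The class $[L_n(j)]$ is trivial exactly when $z^{\pm j}=f_U f_V^{-1}$ for units $f_U\in\mathcal O^*(U^o)$, $f_V\in\mathcal O^*(V^o)$. Since $U^o\simeq\mathbb C[z,u,u^{-1}]$ has units $\mathbb C^*\,u^{\mathbb Z}$ and $V^o\simeq\mathbb C[\xi,v,v^{-1}]$ has units $\mathbb C^*\,v^{\mathbb Z}$, writing $f_U=c_1u^{k_1}$ and $f_V=c_2v^{k_2}=c_2z^{nk_2}u^{k_2}$ (using $v=z^nu$) gives $f_Uf_V^{-1}=(c_1/c_2)\,u^{k_1-k_2}\,z^{-nk_2}$. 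Matching exponents with $z^{\pm j}$ forces $k_1=k_2$ and $\pm j=-nk_2$, which is solvable precisely when $n\mid j$; this recovers the isomorphism $\mathbb Z/n\mathbb Z\xrightarrow{\sim}\{L_n(j)\}$.

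I expect the only real obstacle to be the single numerical computation that pins the order to exactly $n$: identifying $[\ell]=-n\,[\mathcal O_{Z_n}(1)]$ in the conceptual route, or, equivalently, determining the unit groups of $U^o,V^o$ and correctly tracking the exponent of $z$ through the gluing $v=z^nu$ in the explicit route. Everything else is formal once $\Pic(Z_n)=\mathbb Z$ and the excision sequence are available; in particular the triviality of $L_n(n)$ is transparent, since $\mathcal O_{Z_n}(n)\cong\mathcal O_{Z_n}(-\ell)$ restricts to $\mathcal O_{Z_n^o}$ on the complement of $\ell$.
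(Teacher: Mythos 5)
Your proposal is correct, and it departs from the paper's proof exactly where that proof is least self-contained. The paper establishes $n$-periodicity by the same cocycle manipulation you use: since $u^{-1}$ and $v$ are units on the collar, $(z^{-j}) \simeq (v)(z^{-j})(u^{-1}) = (z^{-j+n})$, so $L_n(j) \simeq L_n(j-n)$. But for the other half --- that $L_n(1),\ldots,L_n(n-1)$ are pairwise non-isomorphic --- the paper switches to topology and defers to the proof of Prop.~\ref{elms}: Lemma~\ref{homotopy} gives $\pi_1(Z_n^o)=\mathbb{Z}/n\mathbb{Z}$, hence $H_1(Z_n^o,\mathbb{Z})\cong\mathbb{Z}/n\mathbb{Z}$, Poincar\'e duality gives $H^2(Z_n^o,\mathbb{Z})\cong\mathbb{Z}/n\mathbb{Z}$, and the exponential sequence sends $L_n(j)\mapsto j \bmod n$. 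You avoid topology entirely: your excision route computes $\Pic(Z_n^o)\cong\Pic(Z_n)/\langle[\ell]\rangle=\mathbb{Z}/n\mathbb{Z}$ from $[\ell]=-n\,[\mathcal{O}_{Z_n}(1)]$, and your explicit route upgrades the paper's one-off manipulation into a complete classification by determining the full unit groups $\mathcal{O}^*(U^o)=\mathbb{C}^*u^{\mathbb{Z}}$ and $\mathcal{O}^*(V^o)=\mathbb{C}^*v^{\mathbb{Z}}$, pinning the kernel of $j\mapsto[L_n(j)]$ to exactly $n\mathbb{Z}$. What each buys: your argument is self-contained in the canonical coordinates and additionally yields surjectivity of $\iota^*$ (every line bundle on the collar is some $L_n(j)$), a point the paper never addresses; the paper's topological invariants, on the other hand, do double duty, since $\pi_1(Z_n^o)$ and the mod-$n$ first Chern class are reused in Prop.~\ref{elms} to distinguish restrictions of rank-2 bundles, where no transition-matrix normal form is available. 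One caveat on your explicit route: the unit-group computation as written is valid for regular (algebraic) functions, consistent with the paper's treatment of $Z_n$ as an algebraic variety; in the holomorphic category the units of $\mathcal{O}(U^o)$ are $u^k e^{g(z,u)}$, and ruling out the exponential factor requires a winding-number argument --- which is essentially the topological input that the paper's proof supplies.
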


\begin{proof}
Note that $\Pic(Z_n) = \mathbb{Z}$. Each line bundle over $Z_n$ with first Chern class $j$ 
is isomorphic to $\mathcal O_{Z_n}(j)$ and therefore can be represented by a transition matrix $(z^{-j})$.
Since    in canonical coordinates   we have that $u^{-1}\neq 0$ and $v \neq 0$ 
on the collar $Z_n^o$, we may change coordinates  as follows 
\[
(z^{-j}) \simeq (v)(z^{-j})(u^{-1}) = (z^nu \cdot z^{-j} \cdot u^{-1}) = (z^{-j+n}),
\]
i.e., over $Z_n^o$, the bundles $L_n(j)$ and $L_n(j-n)$ (defined by $(z^{-j})$ and $(z^{-j+n})$ respectively) are isomorphic. 
Moreover, if $j_1 \equiv j_2 \mod  n$, then $L_n(j_1)$ and $L_n(j_2)$ are isomorphic. The proof 
that the cases $1,2,...,n-1$ are not pairwise isomorphic is included in the proof of Prop.\thinspace \ref{elms}.
\end{proof}

The following is a slightly rephrased version of \cite[Prop.\thinspace4.1]{GKM}.

\begin{proposition}\label{elms}Let $E_1$ and $E_2$ be 
rank 2 bundles over $Z_n$ with vanishing first Chern classes 
 and splitting types $j_1$ and $j_2$, respectively. There exists an isomorphism $E_1\vert_{Z_n^o} \simeq E_2\vert_{Z_n^o}$ if  and only if $j_1 \equiv j_2 \, \textrm{mod} \, n$. In particular, $E_1$ 
is trivial over $Z_n$ if and only if $j_1 \equiv 0 \, \textrm{mod} \, n$.
\end{proposition}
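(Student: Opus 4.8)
The plan is to pass to transition matrices in the canonical collar coordinates and translate the statement into gauge equivalence of cocycles. Writing $E_i\vert_{Z_n^o}$ with respect to the cover $Z_n^o = U^o \cup V^o$, each bundle is a transition function $U^o \cap V^o \to GL_2(\mathbb C)$, and since $\mathcal O_{Z_n}(j)$ is represented by $(z^{-j})$ (as in the preceding proposition), a rank $2$ bundle with vanishing first Chern class and splitting type $j$ can be put in upper-triangular form
$$M_j = \begin{pmatrix} z^{-j} & p \\ 0 & z^{j} \end{pmatrix},$$
where $p = p(z,u)$ encodes the extension class of $0 \to \mathcal O(-j) \to E \to \mathcal O(j) \to 0$. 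Two such bundles restrict isomorphically to the collar exactly when their matrices are equivalent under $M \mapsto h_V M h_U^{-1}$ with $h_U \in GL_2(\mathbb C[z,u,u^{-1}])$ and $h_V \in GL_2(\mathbb C[\xi,v,v^{-1}])$; the essential new feature of the collar, compared with $Z_n$, is that $u$ and $v$ are now invertible.

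For the implication $j_1 \equiv j_2 \bmod n \Rightarrow E_1\vert_{Z_n^o} \simeq E_2\vert_{Z_n^o}$ I would exploit precisely this invertibility, reusing the computation of the preceding proposition. Conjugating by $h_U = \Diag(u, u^{-1})$ and $h_V = \Diag(v, v^{-1})$ and using $v = z^n u$ replaces the diagonal $(z^{-j}, z^{j})$ by $(z^{-(j-n)}, z^{j-n})$ while multiplying $p$ by the monomial $z^n u^2$; this is an explicit collar isomorphism between the splitting types $j$ and $j-n$. Iterating reduces any $j$ to its residue modulo $n$, after which it remains to absorb the resulting off-diagonal entry with a unipotent transformation $\left(\begin{smallmatrix} 1 & * \\ 0 & 1 \end{smallmatrix}\right)$, reducing the matching of the two extension entries to the vanishing of their difference in the Čech $H^1$ of $\mathcal O_{Z_n^o}(-2j)$ on $\{U^o, V^o\}$. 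Here the invertibility of $u$ enters again, enlarging the space of coboundaries; this is the step where the explicit normal forms of \cite{BGK, GKM} are the convenient input.

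For the converse, $E_1\vert_{Z_n^o} \simeq E_2\vert_{Z_n^o} \Rightarrow j_1 \equiv j_2 \bmod n$, I would construct an invariant of the collar bundle recovering $j \bmod n$. The natural device is the universal cover: by Lemma \ref{homotopy}, $\pi_1(Z_n^o) = \mathbb Z/n$, and in fact the contraction $Z_n \to \mathcal X_n = \mathbb C^2/\Gamma$ identifies $Z_n^o \simeq (\mathbb C^2 \setminus \{0\})/\Gamma$ with $\Gamma \simeq \mathbb Z/n$ acting by the $\frac1n(1,1)$-action, so a bundle on the collar is the same as a $\Gamma$-equivariant bundle on $\mathbb C^2 \setminus \{0\}$. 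Such a bundle extends across the origin by Hartogs, is trivial on $\mathbb C^2$, and carries a linear $\Gamma$-action on the fibre over the fixed point $0$; the resulting pair of characters, with $L_n(\pm j)$ contributing $\rho \mapsto \rho^{\pm j}$ up to a fixed normalisation, is the sought invariant. Read off directly it recovers $\{\,j, -j\,\} \bmod n$, and its rank-one specialisation already yields the pairwise non-isomorphism of $L_n(1), \dots, L_n(n-1)$ promised after the previous proposition.

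The main obstacle is exactly this residual ambiguity: the unordered character pair only determines $j \bmod n$ up to sign, whereas the statement asks for $j \bmod n$ on the nose. Breaking the $j \leftrightarrow -j$ symmetry requires remembering the filtration, namely the sub-line-bundle $L_n(-j) \subset E\vert_{Z_n^o}$ with quotient $L_n(j)$, and showing it is canonically attached to $E\vert_{Z_n^o}$ rather than interchangeable with its quotient; concretely I would promote the invariant to the $\Gamma$-equivariant extension class in $\Ext^1_\Gamma(\mathcal O_{\rho^{j}}, \mathcal O_{\rho^{-j}})$ over $\mathbb C^2$ and show that its behaviour distinguishes $j$ from $-j$. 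Equivalently, in the matrix picture, one must verify that no gauge transformation over $\mathbb C[z,u,u^{-1}]$, $\mathbb C[\xi,v,v^{-1}]$ conjugates $M_{j}$ into a form with the diagonal entries $z^{-j}$ and $z^{j}$ exchanged. This anti-diagonal obstruction, together with its interaction with the extension entry $p$, is the delicate heart of the argument.
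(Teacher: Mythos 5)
Your forward direction is essentially the paper's argument written at the level of transition matrices: the paper composes two elementary transformations along $\ell$ with tensoring by $\mathcal{O}(-n)$, namely $\Phi = \otimes\,\mathcal{O}(-n)\circ\Elm_{\mathcal{O}_\ell(j+n)}\circ\Elm_{\mathcal{O}_\ell(j)}$, to shift the splitting type by $n$ without changing the collar restriction, and the engine in both cases is the same observation, that $u$ and $v$ are units on $Z_n^o$ (the paper phrases this as ``$s=u$ trivializes $\mathcal{O}_\ell(-n)$ on the collar''). Your remaining step of killing the off-diagonal entry $p$ is genuinely needed (the paper handles it only implicitly), but it is fillable, and in fact your own equivariant picture supplies it: a bundle on $Z_n^o$ is a $\Gamma$-equivariant bundle on $\mathbb{C}^2\setminus\{0\}$, extends to a $\Gamma$-equivariant bundle on $\mathbb{C}^2$, and after linearization at the fixed point is a representation of $\Gamma\simeq\mathbb{Z}/n\mathbb{Z}$; since $\Gamma$ is abelian, every such representation is a sum of characters, so every rank-2 bundle on the collar splits as $L_n(a)\oplus L_n(b)$. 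For the converse, your route (universal cover, extension across the origin, character pair at the fixed point) is genuinely different from the paper's, which instead uses Lemma \ref{homotopy}, Poincar\'e duality and the exponential sequence to produce the invariant $L_n(j)\mapsto j \bmod n$; yours is sharper, since it classifies all collar bundles rather than only line bundles.

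However, the step you call ``the delicate heart of the argument'' --- breaking the $j\leftrightarrow -j$ symmetry --- is not a gap that can be closed: it is impossible, because the proposition as printed is false without a sign. Take $n\geq 3$ and the split bundles $E_1=\mathcal{O}_{Z_n}(1)\oplus\mathcal{O}_{Z_n}(-1)$ and $E_2=\mathcal{O}_{Z_n}(n-1)\oplus\mathcal{O}_{Z_n}(1-n)$, with splitting types $j_1=1$ and $j_2=n-1$. Since $L_n(n-1)\simeq L_n(-1)$ and $L_n(1-n)\simeq L_n(1)$ on the collar, we get $E_1\vert_{Z_n^o}\simeq E_2\vert_{Z_n^o}$, yet $1\not\equiv n-1 \pmod n$. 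For a split bundle there is no canonical sub-line-bundle as opposed to quotient, so the filtration you propose to remember is not an invariant of $E\vert_{Z_n^o}$; your unordered character pair $\{j,-j\}\bmod n$ is a complete invariant, and it is genuinely symmetric. What your argument proves --- and what is true --- is that $E_1\vert_{Z_n^o}\simeq E_2\vert_{Z_n^o}$ if and only if $j_1\equiv\pm j_2\pmod n$; the ``in particular'' clause survives unchanged since $\pm 0=0$. Note that the paper's own proof has exactly the same reach: its invariant, the first Chern classes mod $n$ of the line-bundle summands, is also blind to the ordering, so it too only establishes the $\pm$ version. Rather than chasing the anti-diagonal obstruction, you should stop at the $\pm$ statement and record that the proposition, stated as a rephrasing of \cite[Prop.\thinspace 4.1]{GKM}, needs this sign.
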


\begin{proof}
We first claim that the bundle $\mathcal{O}_\ell(-n)$ is trivial on $Z_n^o$. In fact, if $u=0$ is the equation of $\ell$, then $s(z,u)=u$ determines a section of $\mathcal{O}_\ell(-n)$ that does not vanish on $Z_n^o$.

If a bundle $E$ over $Z_n$ has splitting type $j$, then by definition, $E\vert_\ell \cong \mathcal{O}_\ell(-j) \oplus \mathcal{O}_\ell(j)$. 
So there is a surjection $\rho \colon E\vert_\ell \to \mathcal{O}_\ell(j)$, and a corresponding  elementary transformation, 
resulting in a vector bundle 
 $E' = \Elm_{\mathcal{O}_\ell(j)}(E)$ which splits over 
$\ell$ as $\mathcal{O}_\ell(-n) \oplus \mathcal{O}_\ell(j+n)$, see \cite[Sec.\thinspace 3]{BGK}. 
Therefore we can use the surjection $\rho \colon E'\vert_\ell \to \mathcal{O}_\ell(j+n)$ to perform a second elementary transformation, and we obtain the bundle $E'' = \Elm_{\mathcal{O}_\ell(j+n)}(E')$, which splits over $\ell$ as $\mathcal{O}_\ell(-j) \oplus \mathcal{O}_\ell(j+2n)$ and has first Chern class $2n$. 
Tensoring by $\mathcal{O}_\ell(-n)$ we get back to a $\mathfrak{sl}_2(\mathbb{C})$-bundle with splitting type $j+n$. Hence, the transformation
\[
\Phi(E) = \otimes \mathcal{O}(-n) \circ \Elm_{\mathcal{O}_\ell(j+n)} \circ \Elm_{\mathcal{O}_\ell(j)}(E)
\]
increases the splitting type by $n$ while keeping the isomorphism type of $E$ over $Z_n^o$. So we need only to analyze bundles with splitting type $j < n$. 

If $j=0$, the bundle is globally trivial on $Z_n$. If $j \neq 0$, then $E\vert_{Z_n^o}$ induces a non-zero element on the fundamental group $\pi_1 (Z_n ^o) = \mathbb{Z}/ n \mathbb{Z}$.

By Lem.\thinspace \ref{homotopy} the collar $Z_n^o$ has the homotopy type of an $S^1$-bundle over $S^2$ and $\pi_1 (Z_n ^o) = \mathbb{Z}/ n \mathbb{Z}$.
Therefore $H_1(Z_n^o, \mathbb Z) = \mathbb{Z}/ n\mathbb{Z}$ and by Poincar\'e duality $H^2(Z_n^o, \mathbb Z) = \mathbb{Z}/ n\mathbb{Z}$.
The exponential sheaf sequence 
$$0 \rightarrow \mathbb Z \rightarrow \mathcal O \rightarrow \mathcal O^* \rightarrow 0$$ induces 
the first Chern class map 
$$H^1(Z_n^o, \mathcal O^*) \rightarrow H^2(Z_n^o, \mathbb Z) = \mathbb{Z}/ n \mathbb{Z},$$
and $$L_n(j) \mapsto j \mod n.$$
\end{proof}

In this section we have described vector bundles on $Z_n$, their moduli, and behaviour on collars. 
We will  see next that each splitting type is connected to the lower ones by deformations.

\section{Deformations}\label{def}     
      In this section we justify the vertical upwards arrow appearing in diagram (\ref{diagr}).
      We start with a vector bundle $E$ with splitting type $(j, -j)$ on $Z_n$, so that $E$ may be written as an  extension
     \begin{equation}\label{extclass} 0 \rightarrow \mathcal O(-j) \rightarrow E \rightarrow \mathcal O(j) \rightarrow 0 .\end{equation}
 Alternatively, we may also choose to write $E$ as an extension of $\mathcal O(j+s)$ by $\mathcal O(-j-s)$ for any $s>0$.
 To see this, just observe that there exist inclusions
$$H^1(\mathcal O(-2j)) = \Ext^1(\mathcal O(j), \mathcal O(-j)) \stackrel{\iota}{\hookrightarrow} \Ext^1(\mathcal O(j+s), \mathcal O(-j-s))=H^1(\mathcal O(-2j -2s)).$$

Let  $p$ be the extension class corresponding to representing the bundle $E$ by the exact sequence (\ref{extclass}).
Next, fixing  an injection $\iota$,  consider the family   $t\cdot \iota (p)$ of 
extensions of $\mathcal O(j+1)$ by $\mathcal O(-j-1)$. For such a family, when
$t=0$ we obtain $\mathcal O(j+1) \oplus \mathcal O(-j-1)$ but when $t=1$ we obtain $E$. 

Now, using induction on $j$, we conclude that every bundle on 
 $Z_n$ occurs as a deformation of another bundle with  splitting type as high as desired. In particular, such  behaviour
 of lowering the splitting type via deformations  is also observed over
 the collars, justifying the vertical uparrow {\bf def} appearing in Thm.\thinspace \ref{t1}.
  We now combine this vertical uparrow with the vertical downarrow {\bf bir} described in Sec.\thinspace \ref{elm}.
 There is a  1-1 correspondence between elements of the skeleton and splitting types on the collar.
Given that  this correspondence is obtained via  a combination of 2 dualities, we call it  a  duality transformation.
We denote it by a horizontal double arrow:
 $$L_j \Longleftrightarrow \mathcal{O}_{Z_{n}^{\circ}}(j)
\oplus  \mathcal{O}_{Z_{n}^{\circ}}(-j).$$
  
  Collecting horizontal and vertical arrows together, we obtain the commutative diagram claimed in Thm.\thinspace \ref{t1}.
  \begin{figure}[h]
  $$
\begin{tikzcd}[row sep=10ex, column sep = -1.5ex,
/tikz/column 5/.append style={anchor=base east},
/tikz/column 6/.append style={anchor=base west}
]
  L_j \arrow[d,swap, "\textup{\bf bir}"] 
& \subset T^*\mathbb{P}^{n-1} \arrow[rrr, Leftrightarrow, "\textup{\textbf{dual}}"]   
&\phantom{xxxxx} & \phantom{xxxxx} & \phantom{.}
 \phantom{x...}\mathcal{O}_{Z_{n}^{\circ}}(j)
& \oplus 
&\!\! \mathcal{O}_{Z_{n}^{\circ}}(-j)\phantom{xxx} \\
 L_{j+1}
& \subset T^*\mathbb{P}^{n-1} \arrow[rrr,Leftrightarrow,swap, "\textup{\textbf{dual}}"]
& & & \phantom{.}
\!\!\!\mathcal{O}_{Z_{n}^{\circ}}(j+1)
& \oplus \arrow[u, swap, "\textup{\bf def}"] 
& \mathcal{O}_{Z_{n}^{\circ}}(-j-1) .
\end{tikzcd}
$$
\end{figure}

In conclusion, we have given an explicit geometric description of a duality between Lagrangians in the skeleta of cotangent bundles 
 and 
vector bundles on collars.
The symplectic side of the duality studies the components  of the Lagrangian 
 skeleta of cotangent bundles over $n$-dimensional projective spaces. 
 The complex algebraic side  considers
 only $2$-dimensional complex varieties. 
 These 2 are rather different types of objects.
 So, a priori this duality was not at all evident, but was abstractly predicted by a combination of 2 other types of duality.

In future  work, we intend to pursue a generalization of this 
type of duality to the realm of Calabi--Yau threefolds, investigating what symplectic manifolds and Lagrangians 
are dual to vector bundles on local Calabi--Yau varieties  and what operations occur as dual to 
deformations of vector bundles, see \cite{GR, GKRS}. The latter promises to be a challenging question, given the existence of 
infinite dimensional families of deformations in the case of  $3$-dimensional varieties, see \cite{BGS}.

 \section*{Acknowledgements}
\noindent
E. Ballico is a member of  MIUR and GNSAGA of INdAM (Italy).
E. Gasparim was partially supported by  the Vicerrector\'ia de Investigaci\'on y Desarrollo Tecnol\'ogico de la  Universidad Cat\'olica del Norte (Chile) 
and by the Department of Mathematics of the University of Trento. 
 B. Suzuki was supported by the ANID-FAPESP cooperation 2019/13204-0.
F. Rubilar acknowledges support from  Beca Doctorado Nacional -- Folio 21170589. 
Gasparim, Rubilar and Suzuki acknowledge  support of MathAmSud GS\&MS 21-MATH-06 - 2021/2022.
We are grateful to I. Cheltsov for inviting us to contribute to this volume.

\end{document}